\newtheorem{theorem}{Theorem}[section]
\newtheorem{lemma}[theorem]{Lemma}
\newtheorem{corollary}[theorem]{Corollary}
\newtheorem{conjecture}[theorem]{Conjecture}
\newtheorem{cit}[theorem]{Citation}
\newtheorem*{main:BNSR_Houghton}{Theorems~\ref{thrm:pos} and~\ref{thrm:BNSR_Houghton}}
\theoremstyle{definition}
\newtheorem{definition}[theorem]{Definition}
\newtheorem{remark}[theorem]{Remark}
\newcommand{\Z}{\mathbb{Z}}
\newcommand{\N}{\mathbb{N}}
\newcommand{\R}{\mathbb{R}}
\newcommand{\lk}{\operatorname{lk}}
\newcommand{\dlk}{\lk^\downarrow\!}
\newcommand{\alk}{\lk^\uparrow\!}
\newcommand{\defeq}{:=}
\DeclareMathOperator{\Hom}{Hom}
\DeclareMathOperator{\Symm}{Symm}
\DeclareMathOperator{\F}{F}
\DeclareMathOperator{\FP}{FP}
\DeclareMathOperator{\CAT}{CAT}
\DeclareMathOperator{\id}{id}
\DeclareMathOperator{\conv}{conv}
\DeclareMathOperator{\image}{image}
\numberwithin{equation}{section}
\begin{document}

\title{The BNSR-invariants of the Houghton groups, concluded}
\date{\today}
\subjclass[2010]{Primary 20F65; Secondary 57M07}

\keywords{Houghton group, BNSR-invariant, finiteness properties, cube complex, CAT(0)}

\author{Matthew C.~B.~Zaremsky}
\address{Department of Mathematics and Statistics, University at Albany (SUNY), Albany, NY 12222}
\email{mzaremsky@albany.edu}

\begin{abstract}
We give a complete computation of the BNSR-invariants $\Sigma^m(H_n)$ of the Houghton groups $H_n$. Partial results were previously obtained by the author, with a conjecture about the full picture, which we now confirm. The proof involves covering relevant subcomplexes of an associated $\CAT(0)$ cube complex by their intersections with certain locally convex subcomplexes, and then applying a strong form of the Nerve Lemma. A consequence of the full computation is that for each $1\le m\le n-1$, $H_n$ admits a map onto $\Z$ whose kernel is of type $\F_{m-1}$ but not $\F_m$, and moreover no such kernel is ever of type $\F_{n-1}$.
\end{abstract}

\maketitle
\thispagestyle{empty}

\section*{Introduction}

The Bieri--Neumann--Strebel--Renz (BNSR) invariants $\Sigma^m(G)$ ($m\in\N$) of a group $G$ are a sequence of geometric invariants, introduced in \cite{bieri87,bieri88}, that reveal a breadth of information about certain subgroups of $G$. They are notoriously difficult to compute, and a full computation has been done for only a handful of relevant groups. Most prominently, the BNSR-invariants are fully computed for all right-angled Artin groups \cite{meier98,bux99}. In \cite{zaremsky17} we computed the BNSR-invariants of the generalized Thompson groups $F_{n,\infty}$ and obtained partial results for the Houghton groups $H_n$. In this paper we finish the computation of the BNSR-invariants of the Houghton groups. Our main result is as follows (with the notation $m(\chi)$ explained in Section~\ref{sec:houghton_chars}).

\begin{main:BNSR_Houghton}
For any $0\ne \chi\in \Hom(H_n,\R)$ we have $[\chi]\in \Sigma^{m(\chi)-1}(H_n)\setminus\Sigma^{m(\chi)}(H_n)$.
\end{main:BNSR_Houghton}

The statement $[\chi]\in \Sigma^{m(\chi)-1}(H_n)$ was proved in \cite{zaremsky17}, and is cited as Theorem~\ref{thrm:pos} here; the new result is Theorem~\ref{thrm:BNSR_Houghton}, that $[\chi]\not\in \Sigma^{m(\chi)}(H_n)$. Since we always have $\Sigma^m(G)\subseteq \Sigma^{m-1}(G)$ for all $m$ and $G$, Theorems~\ref{thrm:pos} and~\ref{thrm:BNSR_Houghton} provide a complete computation of the BNSR-invariants of the Houghton groups.

The proof makes use of the $\CAT(0)$ structure of a natural cube complex on which $H_n$ acts. We exhibit a family of combinatorially convex subcomplexes called ``blankets'' (Definition~\ref{def:blanket}) that cover a relevant $\chi$-superlevel complex, and use a strong form of the Nerve Lemma to conclude that the $\chi$-superlevel complex is not $(m(\chi)-1)$-connected. This then leads reasonably quickly to the result. The geometry of the complex is crucial for getting the regions covered by the blankets to be highly connected enough to apply the Strong Nerve Lemma. We believe that this covering approach could be useful in the future for understanding BNSR-invariants of other groups acting naturally on $\CAT(0)$ cube complexes, and possibly for approaching the $\Sigma^m$-Conjecture (see Conjecture~\ref{conj:sigmam}) for (certain classes of) metabelian groups.

This paper is organized as follows. In Section~\ref{sec:background} we recall some general background. In Section~\ref{sec:houghton_chars} we recall some specific background from \cite{zaremsky17} and state our main result, Theorem~\ref{thrm:BNSR_Houghton}. In Section~\ref{sec:cpx} we recall the $\CAT(0)$ cube complex $X_n$ on which $H_n$ acts, and introduce an important family of subcomplexes called ``blankets'' in $X_n$. Finally in Section~\ref{sec:proof} we prove Theorem~\ref{thrm:BNSR_Houghton}.

\subsection*{Acknowledgments} I am grateful to the anonymous referee for a number of helpful suggestions.

\section{Background}\label{sec:background}

In this section we collect some background on Houghton groups, Morse theory and BNSR-invariants.

\subsection{Houghton groups}\label{sec:houghton}

Let $[n]\defeq \{1,\dots,n\}$. The \emph{Houghton group} $H_n$, introduced in \cite{houghton79}, is the subgroup of $\Symm([n]\times\N)$ consisting of those elements $\eta$ such that for each $1\le i\le n$ there exists $m_i\in\Z$ and $N_i\in\N$ such that $(i,x)\eta=(i,x+m_i)$ for all $x\ge N_i$ (we will always write elements of $\Symm([n]\times\N)$ to the right of their arguments, to sync with the notation in \cite{zaremsky17}). Intuitively such an $\eta$ acts as an ``eventual translation'' on each $\{i\}\times\N$. It is known that $H_n$ is of type $\F_{n-1}$ but not $\F_n$ \cite[Theorem~5.1]{brown87}. Higher dimensional versions of the Houghton groups, due to Bieri and Sach \cite{bieri16,sach16}, have also been developed.

\subsection{BNSR-invariants}\label{sec:bnsr}

A group is of \emph{type $F_m$} if it admits a proper cocompact action on an $(m-1)$-connected CW-complex. Given a group $G$ call a homomorphism $\chi\colon G\to \R$ a \emph{character}, and call two characters \emph{equivalent} if they are positive scalar multiples of each other. The equivalence classes $[\chi]$ of non-trivial characters form the \emph{character sphere} $\Sigma(G)$. The BNSR-invariants $\Sigma^m(G)$ ($m\in\N$) of a group $G$ are certain subspaces of $\Sigma(G)$, defined whenever $G$ is of type $\F_m$. They were introduced for $m=1$ in \cite{bieri87} and $m\ge 2$ in \cite{bieri88}. The definition is as follows.

\begin{definition}[BNSR-invariant]\label{def:BNSR}
Let $G$ be a group acting properly cocompactly on an $(m-1)$-connected CW-complex $Y$, so in particular $G$ is of type $\F_m$. For any $0\ne \chi\in \Hom(G,\R)$ there exists a \emph{character height function} $h_\chi \colon Y \to \R$, that is, a map satisfying $h_\chi(g.y)=\chi(g)+h_\chi(y)$ for all $g\in G$ and $y\in Y$. The $m$th \emph{Bieri--Neumann--Strebel--Renz (BNSR) invariant} $\Sigma^m(G)$ is
\[
\Sigma^m(G) \defeq \{[\chi]\in\Sigma(G)\mid (Y^{t\le h_\chi})_{t\in\R} \text{ is essentially } (m-1)\text{-connected}\}\text{.}
\]
\end{definition}

Here $Y^{t\le h_\chi}$ is the full subcomplex of $Y$ supported on those $v\in Y^{(0)}$ with $t\le h_\chi(v)$. Recall that $(Y^{t\le h_\chi})_{t\in\R}$ being \emph{essentially $(m-1)$-connected} means that for all $t\in\R$ there exists $s\le t$ such that the inclusion $Y^{t\le h_\chi}\to Y^{s\le h_\chi}$ induces the trivial map in $\pi_k$ for all $k\le m-1$.

As is standard, we will write $\Sigma^m(G)^c$ for the complement $\Sigma(G)\setminus\Sigma^m(G)$. Note that $\Sigma^1(G)\supseteq \Sigma^2(G)\supseteq\cdots$. There are also homological BNSR-invariants $\Sigma^m(G;\Z)$, analogous to the homological finiteness properties $\FP_m$, though we will not discuss the homological case much here.

\begin{remark}[Erratum to \cite{zaremsky17,bux04}]
Definition~\ref{def:BNSR} is almost identical to \cite[Definition~1.1]{zaremsky17}, except there the condition on stabilizers was that each $p$-cell stabilizer be of type $\F_{m-p}$ and here we just assume a proper action (so finite stabilizers). The reason for this change is that in order for $h_\chi$ to exist, the cell stabilizers need to lie in $\ker(\chi)$, and assuming a proper action is an easy way to assure this. This condition on $\chi$ killing the stabilizers was accidentally omitted in \cite[Definition~1.1]{zaremsky17} (and in \cite[Definition~8.1]{bux04}, which \cite[Definition~1.1]{zaremsky17} followed), though since in practice it was always applied to situations with finite stabilizers, this error was irrelevant. Also note that one can always choose $Y$ so that the action of $G$ is proper (even free) if one wants, it is just sometimes convenient (in other situations than our present one) to deal with spaces with infinite stabilizers.
\end{remark}

The BNSR-invariants of $G$ form a sort of catalog describing the precise finiteness properties of subgroups of $G$ containing the commutator subgroup $[G,G]$, namely:

\begin{cit}\cite[Theorem~1.1]{bieri10}\label{cit:bnsr_fin_props}
Let $G$ be a group of type $\F_m$ and let $[G,G]\le H\le G$. Then $H$ is of type $\F_m$ if and only if for every $[\chi]\in\Sigma(G)$ such that $\chi(H)=0$ we have $[\chi]\in\Sigma^m(G)$.
\end{cit}

In our computation of $\Sigma^m(H_n)$ for the Houghton groups $H_n$, it turns out the structure is such that the invariant $\Sigma^1(H_n)$ already determines all the $\Sigma^m(H_n)$ in a natural, ``polyhedral'' way. We describe this phenomenon in the following definition.

\begin{definition}[Bieri $\Sigma$-property]\label{def:bieri}
For a finitely generated group $G$ and a subset $S$ of $\Sigma(G)$, denote by $\conv_{\le m}S$ the union of convex hulls in $\Sigma(G)$ of all subsets of at most $m$ non-antipodal elements in $S$. Now suppose $G$ is of type $\F_m$ and suppose that
\[
\Sigma^m(G)^c = \conv_{\le m}\Sigma^1(G)^c \text{.}
\]
Then we say $G$ has the \emph{Bieri $\Sigma^m$-property}. If $G$ has the Bieri $\Sigma^m$-property for all $m$ such that $G$ is of type $\F_m$ then we say $G$ has the \emph{Bieri $\Sigma$-property}.
\end{definition}

Every finitely generated group trivially has the Bieri $\Sigma^1$-property, but finitely presented groups need not have the Bieri $\Sigma^2$-property, in fact Kochloukova found a solvable (even nilpotent-by-abelian) counterexample \cite[Theorem~B]{kochloukova02}. The well known $\Sigma^m$-Conjecture, which seems to originally be due to Bieri though perhaps was first stated in this form by Meinert, can be phrased (in homotopical form) as:

\begin{conjecture}[$\Sigma^m$-Conjecture]\label{conj:sigmam}
Every metabelian group of type $\F_m$ has the Bieri $\Sigma^m$-property.
\end{conjecture}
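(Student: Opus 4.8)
The plan is to translate the conjecture into commutative algebra and then attack the resulting finiteness statement. Write the metabelian group $G$ as an extension $1\to A\to G\to Q\to 1$ with $A=[G,G]$ abelian and $Q=G_{\mathrm{ab}}$ finitely generated abelian; conjugation makes $A$ a finitely generated module over $R\defeq\Z Q$, a Noetherian ring that is a Laurent polynomial extension $(\Z T)[t_1^{\pm1},\dots,t_n^{\pm1}]$ of the finite group ring of the torsion subgroup $T\le Q$. Every character of $G$ kills $A$, so $\Sigma(G)=\Sigma(Q)\cong S^{n-1}$, and by Bieri--Strebel and Bieri--Groves the set $\Sigma^1(G;\Z)^c$ is a rationally defined spherical polyhedron read off explicitly from the module $A$ (its valuation invariant $\Sigma_A^c$). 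I would first reduce the homotopical conjecture to its homological form $\Sigma^m(G;\Z)^c=\conv_{\le m}\Sigma^1(G;\Z)^c$: Renz's identity $\Sigma^m(G)=\Sigma^2(G)\cap\Sigma^m(G;\Z)$ for $m\ge 2$, together with the classically known case $m\le 2$ (which in particular pins down $\Sigma^2(G)$ as the expected polyhedron), upgrades the homological statement to the homotopical one. From here on everything is a statement about the $R$-module $A$.

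Next, isolate the open half. The inclusion $\conv_{\le m}\Sigma^1(G;\Z)^c\subseteq\Sigma^m(G;\Z)^c$ holds for every group of type $\FP_m$ (Meinert's inequality), so the content is the reverse: if $[\chi]\notin\conv_{\le m}\Sigma^1(G;\Z)^c$ then $[\chi]\in\Sigma^m(G;\Z)$. By the Bieri--Renz criterion, $[\chi]\in\Sigma^m(G;\Z)$ is equivalent to the trivial module being of type $\FP_m$ over the monoid ring $\Z G_\chi$, where $G_\chi=\{g\in G\mid\chi(g)\ge 0\}$; for metabelian $G$ this unwinds to a Bieri--Groves-style \emph{relative $m$-tameness} condition on $A$ --- concretely, finiteness through degree $m-1$ of the homology of a Koszul-type complex built from $A$ and the character $\chi$ over $R$. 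The hypothesis $[\chi]\notin\conv_{\le m}\Sigma^1(G;\Z)^c$ says precisely that the points of $\Sigma_A^c$ lying in the closed hemisphere about $\chi$ cannot be combined $\le m$ at a time to produce $[\chi]$, and the goal is to see that this geometric smallness forces the module-theoretic finiteness.

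To establish that reverse inclusion I would run a double induction --- on the torsion-free rank $n$ of $Q$, and on the module $A$ via a finite filtration with subquotients $R/P$ for prime ideals $P$ of $R$. On the subquotients $R/P$ the valuation invariant is governed by the support of $P$ and the tameness condition can be checked directly; one then tries to propagate it along the filtration using the behaviour of $\Sigma$-invariants and of $\FP_m$ under extensions of modules, keeping the convex-hull combinatorics synchronized with the filtration steps. A complementary route, closer to the methods of the present paper, should be available in favourable cases: when $A$ admits a good geometric model one builds a $\CAT(0)$ cube complex on which $G$ acts (in the spirit of the complexes used here for $H_n$ and elsewhere for generalized Thompson and lamplighter-type groups), covers the relevant $\chi$-superlevel complex by combinatorially convex ``blanket''-type subcomplexes, and applies the Strong Nerve Lemma to control its connectivity.

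The step I expect to be the main obstacle is exactly the one that has kept this conjecture open since the 1980s: showing that non-membership in the $m$-fold convex hull is \emph{sufficient}, not merely necessary, for the BNSR finiteness condition. In the homological approach it is the point where the filtration argument stalls --- the valuation invariant and tameness are not additive across module extensions, and outside of cases with extra structure (finite Pr\"ufer rank, as in Bieri--Groves; constructible metabelian groups; $m\le 2$) no mechanism is known for recovering the global finiteness from the subquotients. In the geometric approach it is the need to produce higher connectivity of $\chi$-superlevel complexes \emph{uniformly} as $[\chi]$ ranges over an open region of $\Sigma(G)$. Finally, any successful argument must genuinely use that $A$ is a module over a \emph{commutative} ring: Kochloukova's nilpotent-by-abelian example violates even the Bieri $\Sigma^2$-property, so ``metabelian'' cannot be weakened, which is why the commutative-algebraic reduction above seems unavoidable.
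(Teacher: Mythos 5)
The statement you are trying to prove is labelled a \emph{conjecture} in the paper for a reason: it is the open $\Sigma^m$-Conjecture, the paper offers no proof of it, and nothing in the paper's methods is claimed to resolve it. The paper only records the known partial results (Meinert for metabelian groups of finite Pr\"ufer rank, Harlander--Kochloukova for $m=2$) and observes, as a corollary of its actual theorems, that the non-metabelian Houghton groups happen to satisfy the analogous Bieri $\Sigma$-property. So there is no ``paper proof'' to compare yours against, and your proposal cannot be judged as a correct proof of the statement.

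That said, your write-up is an accurate map of the known territory rather than an argument: the reduction to the module $A=[G,G]$ over $\Z Q$, Renz's identity $\Sigma^m(G)=\Sigma^2(G)\cap\Sigma^m(G;\Z)$, Meinert's inequality giving the inclusion $\conv_{\le m}\Sigma^1(G)^c\subseteq\Sigma^m(G)^c$, and the Bieri--Renz monoid-ring criterion are all correctly deployed, and you correctly isolate the one step that constitutes the entire content of the conjecture: showing that $[\chi]\notin\conv_{\le m}\Sigma^1(G;\Z)^c$ \emph{forces} $[\chi]\in\Sigma^m(G;\Z)$. Your proposed double induction stalls exactly where you say it does --- neither the valuation invariant nor the $\FP_m$ condition behaves additively across a prime filtration of $A$, so tameness of the subquotients $R/P$ does not propagate to $A$, and no mechanism is known to bridge this outside the finite-Pr\"ufer-rank and $m\le 2$ cases. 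The alternative geometric route you sketch (blanket-style covers of superlevel sets in a $\CAT(0)$ cube complex plus the Strong Nerve Lemma) is the method this paper uses for the \emph{negative} direction ($[\chi]\notin\Sigma^{m(\chi)}$) for the specific groups $H_n$; there is no indication it yields the uniform positive-direction connectivity statements the conjecture requires for a general metabelian group. In short: the gap is the conjecture itself, you have named it honestly, and the proposal should be presented as a survey of approaches, not a proof.
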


One can state the $\Sigma^m$-Conjecture simultaneously for all $m$ as the ``$\Sigma$-Conjecture'' that every metabelian group has the Bieri $\Sigma$-property. As seen in Corollary~\ref{cor:houghton_bieri}, our results here imply that the Houghton groups have the Bieri $\Sigma$-property. Meinert proved that the $\Sigma$-Conjecture holds for metabelian groups of finite Pr\"ufer rank \cite{meinert96} and Harlander--Kochloukova proved the $\Sigma^2$-Conjecture in general \cite{harlander04}. In addition to the Houghton groups, some other non-metabelian groups whose BNSR-invariants are fully computed revealing that they have the Bieri $\Sigma$-property include Thompson's group $F$ \cite{bieri10}, its relatives $F_{n,\infty}$ \cite{kochloukova12,zaremsky17} and its braided version \cite{zaremsky18}. The most prominent family of groups whose BNSR-invariants are fully computed is the family of right-angled Artin groups \cite{meier98,bux99}, and from the computation one can see that generally speaking ``most'' right-angled Artin groups do not have the Bieri $\Sigma$-property. This is essentially because it is easy for a collection of subsets of vertices of a flag complex to individually induce disconnected subcomplexes but have their union induce a highly connected (e.g, contractible) subcomplex. On the other hand free groups and free abelian groups do have the Bieri $\Sigma$-property for trivial reasons. We leave a precise classification of which right-angled Artin groups have the Bieri $\Sigma$-property as an exercise for the reader.

\subsection{Morse theory}\label{sec:morse}

We will use the definition of Morse function and the statement of the Morse Lemma from \cite{witzel15,zaremsky17}. We recall these now, and see \cite{witzel15,zaremsky17} for any details we leave out here. Fix an affine cell complex $Y$ in this subsection.

\begin{definition}[Morse function]\label{def:morse}
Let $(h,s)\colon Y \to \R\times\R$ a map such that $h$ and $s$ restrict to affine functions on cells. We call $(h,s)$ a \emph{Morse function} provided that $|s(Y^{(0)})|<\infty$ and there exists $\varepsilon>0$ such that whenever $v$ and $w$ are adjacent $0$-cells in $Y$, either $|h(v)-h(w)|\ge\varepsilon$, or else $h(v)=h(w)$ and $s(v)\ne s(w)$.
\end{definition}

We view $(h,s)$ as a height function via the lexicographic order on $\R\times\R$, and the conditions ensure that adjacent $0$-cells have different heights. Along a given cell, $(h,s)$ achieves its minimum and maximum values at unique $0$-faces. If a $0$-cell $v$ is a $0$-face of a cell at which $(h,s)$ achieves its minimum (maximum) on that cell, that cell belongs to the \emph{ascending (descending) star} of $v$. The \emph{ascending (descending) link} of $v$, denoted $\alk v$ ($\dlk v$) is the link of $v$ in its ascending (descending) star. Write $Y^{p\le h\le q}$ for the full subcomplex of $Y$ supported on those $0$-cells $v$ with $p\le h(v)\le q$.

The following is essentially \cite[Lemma~1.4]{zaremsky17}, phrased slightly differently.

\begin{lemma}[Morse Lemma]\label{lem:morse}
Suppose that for each $0$-cell $v$ with $p\le h(v)<q$ ($q<h(v)\le r$) the ascending (descending) link $\alk v$ ($\dlk v$) is $(m-1)$-connected. Then the inclusion $Y^{q\le h} \to Y^{p\le h}$ ($Y^{h\le q} \to Y^{h\le r}$) induces an isomorphism in $\pi_k$ for each $k\le m-1$ and a surjection in $\pi_m$.
\end{lemma}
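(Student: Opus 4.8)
The plan is to pass from $Y^{q\le h}$ to $Y^{p\le h}$ by adjoining the ascending stars of the intervening $0$-cells, controlling the homotopical effect of each adjunction by means of the hypothesis on ascending links; the descending statement is proved identically after replacing $(h,s)$ by $(-h,-s)$, so I discuss only the ascending case. The heart of the matter is a single local step. Fix a $0$-cell $v$ with $p\le h(v)<q$ and let $Z$ be the full subcomplex of $Y$ on the $0$-cells of $Y^{q\le h}$ together with all $0$-cells whose lexicographic $(h,s)$-height exceeds that of $v$. A cell of $Y$ having $v$ as a vertex, all of whose other vertices lie in $Z$, is precisely a cell on which $(h,s)$ attains its minimum at $v$; hence $Z\cup\st^\uparrow v$ is the full subcomplex on $Z^{(0)}\cup\{v\}$ and is obtained from $Z$ by gluing the ascending star $\st^\uparrow v$ along the ascending link $\alk v$. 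Now $\st^\uparrow v$ is contractible (a straight-line retraction toward $v$, as in \cite{witzel15}), so up to homotopy rel $Z$ this amounts to attaching a cone on $\alk v$ to $Z$ along $\alk v\subseteq Z$; and since $\alk v$ is $(m-1)$-connected it admits a CW model with a single $0$-cell and cells only in dimensions $\ge m$, so the result is $Z$ with cells of dimension $\ge m+1$ attached, up to homotopy. Therefore $Z\hookrightarrow Z\cup\st^\uparrow v$ induces an isomorphism on $\pi_k$ for $k\le m-1$ and a surjection on $\pi_m$, and the same conclusion holds when the ascending stars of a family of pairwise non-adjacent $0$-cells of a common lexicographic height are adjoined simultaneously, since these stars are then disjoint and each still contributes only cells of dimension $\ge m+1$.

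To assemble these steps into the lemma, note that since adjacent $0$-cells of equal $h$-value have distinct $s$-values and $|s(Y^{(0)})|<\infty$, the $0$-cells at any given $h$-value split into finitely many classes of pairwise non-adjacent $0$-cells of a common lexicographic height, so moving across a single $h$-value is accomplished in finitely many of the local steps above. To cover the (possibly densely many) intervening $h$-values and land on $Y^{p\le h}$ exactly, I would write $Y^{p\le h}=\bigcup_v\bigl(Y^{q\le h}\cup\st^\uparrow v\bigr)$, the union taken over all $0$-cells $v$ with $p\le h(v)<q$; any finite subcomplex of $Y^{p\le h}$ involves only finitely many of these stars, so $Y^{p\le h}$ is the directed colimit of the finite-stage subcomplexes, and $\pi_k$ commutes with this colimit because spheres and disks are compact. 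A composite, or a directed colimit, of inclusions each inducing a $\pi_{\le m-1}$-isomorphism and a $\pi_m$-surjection again has that property, which gives the lemma.

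I expect the only step requiring genuine care, as opposed to formal manipulation, to be this last bookkeeping: the set of intervening heights need not be well ordered, so the filtration must be phrased as a directed system of finite pieces, and within each finite piece one must verify that every ascending star is glued along precisely its ascending link and not along something larger. These points are routine consequences of Definition~\ref{def:morse} and are spelled out in \cite{witzel15,zaremsky17}; apart from them the argument is just the standard fact that coning off a highly connected subcomplex alters homotopy groups only above the connectivity range.
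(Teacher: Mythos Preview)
The paper does not actually prove this lemma; it is stated without proof as essentially \cite[Lemma~1.4]{zaremsky17} (see also \cite{witzel15}). Your argument is the standard Bestvina--Brady-style Morse argument underlying those cited references and is correct in substance; the bookkeeping you flag concerning the non-well-ordered set of intervening heights is indeed the only delicate point, and it is handled in \cite{witzel15,zaremsky17} along the lines you indicate.
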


As a remark, if $s$ is constant and $h(Y^{(0)})$ is discrete in $\R$, then this definition of Morse function reduces to the original one introduced by Bestvina--Brady in \cite{bestvina97}.

We will need one more topological tool, namely the following strong version of the classical Nerve Lemma.

\begin{cit}\cite[Proposition~1.21]{witzel16}\label{cit:strong_nerve}
Let $X$ be a CW-complex covered by subcomplexes $(X_i)_{i\in I}$ and let $L$ be the nerve of the cover. Let $n\ge 1$. Suppose that any non-empty intersection $X_{i_1}\cap\cdots\cap X_{i_r}$ for $1\le r\le n$ is $(n-r)$-connected. Then $H_k(X)\cong H_k(L)$ for all $k\le n-1$, and $H_n(X)$ surjects onto $H_n(L)$.
\end{cit}

This was not phrased exactly this way in \cite[Proposition~1.21]{witzel16}, but it is straightforward to see this is an equivalent formulation. Note that being not $n$-acyclic implies being not $n$-connected.

\section{Characters of Houghton groups and statement of results}\label{sec:houghton_chars}

In \cite{zaremsky17} a partial computation of $\Sigma^m(H_n)$ was obtained. Before stating the result we need some notation and background from \cite{zaremsky17} regarding characters of $H_n$. For each $1\le i\le n$ the function $\eta \mapsto m_i$, with $m_i$ as in the definition of the Houghton groups, defines an epimorphism $\chi_i \colon H_n \to \Z$, and $\chi_1,\dots,\chi_n$ span $\Hom(H_n,\R)$. Since elements of $H_n$ are bijective we have $m_1+\cdots+m_n=0$ for each $\eta$ and hence $\chi_1+\cdots+\chi_n=0$. In fact $\Hom(H_n,\R)\cong \R^{n-1}$ with basis $\{\chi_1,\dots,\chi_{n-1}\}$, so $\Sigma(H_n)\cong S^{n-2}$. This also implies that for an arbitrary character $\chi$ of $H_n$, written as $\chi=a_1\chi_1+\cdots+a_n\chi_n$ for $a_i\in\R$, the coefficients $(a_1,\dots,a_n)$ are uniquely determined up to shifting by constants $(a,\dots,a)$. The number
\[
m(\chi) \defeq |\{i\mid a_i<\max\{a_1,\dots,a_n\}\}|
\]
is therefore well defined. This measurement will turn out to determine which BNSR-invariants contain $[\chi]$.

It is sometimes convenient to express characters in a ``standard form'' with respect to the characters $\chi_i$.

\begin{definition}[(Ascending) standard form]
If $\chi=a_1\chi_1+\cdots+a_n\chi_n$ we call this expression for $\chi$ a \emph{standard form} if $\max\{a_i\}_{i=1}^n=0$. We call it an \emph{ascending standard form} if $a_1\le\cdots\le a_n=0$.
\end{definition}

Up to shifting by $\chi_1+\cdots+\chi_n=0$ any $\chi$ can be put in standard form. Up to automorphisms of $H_n$, every $\chi$ is equivalent to one in ascending standard form. In particular when trying to determine which BNSR-invariants contain a given character class, without loss of generality it can be expressed in ascending standard form. For $\chi$ in ascending standard form, $m(\chi)$ equals the largest $i$ such that $a_i\ne 0$.

\subsection{Statement of results}\label{sec:results}

The partial results obtained in \cite{zaremsky17} are:

\begin{theorem}\cite{zaremsky17}\label{thrm:pos}
For any $0\ne\chi\in \Hom(H_n,\R)$ we have $[\chi]\in \Sigma^{m(\chi)-1}(H_n)$.
\end{theorem}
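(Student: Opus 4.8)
The goal is to show $[\chi]\in\Sigma^{m(\chi)-1}(H_n)$, i.e.\ that the superlevel sets of a $\chi$-height function on a suitable highly-connected $H_n$-complex form an essentially $(m(\chi)-2)$-connected system. The plan is to work with a concrete contractible $H_n$-CW-complex on which $H_n$ acts properly and cocompactly; the natural candidate is (a subdivision of) the Stein--Farley-type complex built from the partial actions defining $H_n$, essentially the complex whose highly-connected superlevel sets underlie the Brown/Ken Brown proof that $H_n$ is of type $\F_{n-1}$ but not $\F_n$. Without loss of generality, by replacing $\chi$ with an equivalent (under $\operatorname{Aut}(H_n)$) character, I would put $\chi$ in ascending standard form $\chi=a_1\chi_1+\cdots+a_{n}\chi_{n}$ with $a_1\le\cdots\le a_m<a_{m+1}=\cdots=a_n=0$, where $m=m(\chi)$. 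The idea is that $\chi$ is then ``supported'' on the first $m$ rays $\{1\}\times\N,\dots,\{m\}\times\N$, and one wants to show that the $\chi$-descending links are $(m-2)$-connected, so that the Morse Lemma (Lemma~\ref{lem:morse}) gives that the inclusions of superlevel sets are $(m-2)$-connected as required by Definition~\ref{def:BNSR}.

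Concretely, the steps are: (1) fix the complex $Y$ and a character height function $h_\chi$ induced from $\chi$ (using that $\chi$ kills the finite cell stabilizers); arrange the combinatorics so $h_\chi$ together with an auxiliary function $s$ is a Morse function in the sense of Definition~\ref{def:morse}. (2) Identify the descending link of a vertex of $Y$: a vertex corresponds to some finite configuration, and its descending link should decompose as a join whose factors are indexed by the rays $\{i\}\times\N$ on which $a_i<0$, with the factor for ray $i$ being a complex of ``possible retractions along that ray''. Since a ray that genuinely contributes to $\chi$ forces the corresponding factor of the descending link to be highly connected (it is essentially a complex of matchings / a simplex-like complex on an infinite set, hence contractible, or at least appropriately connected), and a join of $k$ nonempty complexes is $(k-1)$-connected — more precisely a join of complexes of connectivities $c_i$ has connectivity $\sum(c_i+2)-2$ — the descending link ends up being at least $(m-2)$-connected, but in general fails to be $(m-1)$-connected because there are exactly $m$ join factors and the ``deficient'' ray (the one indexed by the smallest nonzero $a_i$, or the boundary behaviour there) caps the connectivity at $m-2$. (3) Feed this into Lemma~\ref{lem:morse}: for all $t$, the inclusion $Y^{t\le h_\chi}\hookrightarrow Y^{s\le h_\chi}$ for sufficiently negative $s$ induces isomorphisms on $\pi_k$ for $k\le m-2$; combined with contractibility of $Y$ itself, this yields essential $(m-2)$-connectivity, i.e.\ $[\chi]\in\Sigma^{m-1}(H_n)=\Sigma^{m(\chi)-1}(H_n)$.

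The main obstacle is Step (2): correctly computing the $\chi$-descending links and pinning down their exact connectivity. The join decomposition has to be set up carefully — the complex $Y$ and its Morse function must be chosen so that the descending link of each vertex literally splits as a join over the rays on which $\chi$ is negative, and one must check that each factor (a complex governing how the configuration can be ``pushed down'' along ray $i$) is highly connected. This is where the geometry of the problem does the real work, and it is the analogue of the connectivity computations in Brown's argument for the plain finiteness properties of $H_n$, now weighted by $\chi$; getting the bookkeeping of the indices $\{i:a_i<0\}$ right, and verifying the connectivity of each join factor uniformly in the vertex, is the crux. A secondary technical point is verifying the Morse function hypotheses (discreteness of heights, the $\varepsilon$-separation condition) for the height function coming from an arbitrary real character, which is why one introduces the secondary coordinate $s$ and possibly passes to a subdivision. (This is exactly the argument carried out in detail in \cite{zaremsky17}.)
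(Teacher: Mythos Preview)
Theorem~\ref{thrm:pos} is not proved in this paper; it is quoted from \cite{zaremsky17}, so there is no proof here to compare against directly. That said, the present paper gives enough hints (see the proofs of Lemmas~\ref{lem:drop_essential} and~\ref{lem:good_pieces}, and the references to \cite[Proposition~6.6]{zaremsky17}) to reconstruct the shape of the original argument, and your outline is broadly aligned with it: one works on the cube complex $X_n$ of Section~\ref{sec:cube} (not a Stein--Farley complex per se, though the role is similar), restricts to a cocompact sublevel set $X_{f\le q}$, uses the Morse function $(\chi,f)$ of Definition~\ref{def:morse}, and shows the relevant links are $(m(\chi)-2)$-connected.

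Two corrections to your sketch. First, a direction slip: with the conventions of Definition~\ref{def:BNSR} and Lemma~\ref{lem:morse}, it is the \emph{ascending} links with respect to $(\chi,f)$ that control the inclusions $Y^{t\le h_\chi}\hookrightarrow Y^{s\le h_\chi}$ of superlevel sets, not the descending links. Second, the link structure is not a join indexed by the $m(\chi)$ rays with $a_i<0$. As indicated in the proof of Lemma~\ref{lem:good_pieces}, the $(\chi,f)$-ascending link of a $0$-cell $\phi$ in $X_{f\le q}$ splits as the join of an $f$-\emph{ascending} part (a simplex on the directions $t_k$ with $k>m(\chi)$, i.e.\ with $a_k=0$) and an $f$-\emph{descending} part (a matching-type complex recording ways to extend $\phi$ by sending some $(i,1)$, $i\le m(\chi)$, to a point outside $\image(\phi)$). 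The $(m(\chi)-2)$-connectivity then comes from the interplay of these two pieces and the size of $f(\phi)$, which is why one needs $q$ large (here $q=3n-3$); this is the content of \cite[Proposition~6.6]{zaremsky17} and is where the real work happens, as you correctly anticipated.
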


It was conjectured in \cite{zaremsky17} that moreover $[\chi]\not\in \Sigma^{m(\chi)}(H_n)$, and this is our main result here:

\begin{theorem}\label{thrm:BNSR_Houghton}
For any $0\ne\chi\in \Hom(H_n,\R)$ we have $[\chi]\not\in \Sigma^{m(\chi)}(H_n)$.
\end{theorem}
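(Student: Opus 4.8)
The plan is to show $[\chi]\not\in\Sigma^{m(\chi)}(H_n)$ by exhibiting, for a well-chosen $G=H_n$-cocompact $\CAT(0)$ cube complex $X_n$ equipped with a character height function $h_\chi$, a superlevel complex $X_n^{t\le h_\chi}$ which fails to be essentially $(m(\chi)-1)$-connected, i.e.\ for which no matter how far down one pushes to $X_n^{s\le h_\chi}$ (with $s\le t$), the inclusion-induced map is nontrivial in $\pi_{m(\chi)-1}$ (and in fact in $H_{m(\chi)-1}$, which suffices and is easier to detect). Without loss of generality take $\chi$ in ascending standard form, so $m\defeq m(\chi)$ is the largest index with $a_i\ne 0$. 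The first step is to set up the geometry: recall from Section~\ref{sec:cpx} the cube complex $X_n$ and verify that the blankets of Definition~\ref{def:blanket} are combinatorially convex and that their pairwise and higher-order intersections are again combinatorially convex (this is where the $\CAT(0)$ structure is essential, since convex subcomplexes of $\CAT(0)$ cube complexes are themselves $\CAT(0)$, hence contractible when nonempty). One also needs that each blanket is $h_\chi$-bounded below in an appropriate sense, so that a fixed finite family of blankets covers the slab $X_n^{t\le h_\chi}\cap X_n^{h_\chi\le r}$ for suitable $r$, and that the portion of $X_n^{t\le h_\chi}$ above height $r$ retracts (via the Morse Lemma, Lemma~\ref{lem:morse}, using high connectivity of ascending links coming from Theorem~\ref{thrm:pos}'s machinery) so that the homotopy type in the relevant degree is governed by the covered slab.

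Second, I would compute the nerve $L$ of the blanket cover of the slab and identify it — the expectation, matching the conjectural picture, is that $L$ is homotopy equivalent to a wedge of $(m-1)$-spheres (concretely, something like the boundary of an $(m-1)$-simplex or an iterated join reflecting the $m$ ``active'' coordinates of $\chi$), so that $\widetilde H_{m-1}(L)\ne 0$. Since every nonempty $r$-fold intersection of blankets is combinatorially convex and hence contractible, in particular $(k)$-connected for every $k$, the hypotheses of the Strong Nerve Lemma (Citation~\ref{cit:strong_nerve}) are satisfied for all $n$, giving $H_k(X)\cong H_k(L)$ in all degrees $k$ below the range and a surjection in the top degree; applied with the covered complex in the role of $X$, this yields $H_{m-1}(X_n^{t\le h_\chi})$ surjecting onto (indeed isomorphic to) $\widetilde H_{m-1}(L)\ne 0$, up to the usual reduced/unreduced bookkeeping.

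Third comes the essential-connectivity step: one must show that pushing down to any $X_n^{s\le h_\chi}$ does not kill this homology class. For this I would argue that the class is detected by a map to, or a pairing with, something $H_n$-equivariant and independent of $s$ — for instance, by showing the blanket cover and its nerve are compatible across the inclusions $X_n^{t\le h_\chi}\hookrightarrow X_n^{s\le h_\chi}$ in a way that makes the induced map on $H_{m-1}$ factor through a nonzero map of nerves, or by a direct Morse-theoretic argument that the descending links in the region $s\le h_\chi\le t$ are not $(m-1)$-connected (again by a blanket/Strong-Nerve computation on each descending link), so that by the Morse Lemma the inclusion cannot be surjective, let alone trivial, in $\pi_{m-1}$. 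The cleanest route is probably the descending-link version: reduce ``not essentially $(m-1)$-connected'' to ``for arbitrarily low $s$ there is a $0$-cell of height in $(s,t]$ whose descending link has nonzero $\widetilde H_{m-1}$'', and then prove the latter by recognizing each such descending link as covered by traces of blankets with nerve again a wedge of $(m-1)$-spheres.

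The main obstacle I anticipate is precisely this last point — proving that the nontrivial $H_{m-1}$ genuinely \emph{persists} under pushing down, rather than merely appearing at one level. It is easy to produce a single superlevel set with bad homology; the content of $[\chi]\notin\Sigma^m$ is the \emph{essential} version, and this is where one needs the blankets to be not just individually convex but to interact correctly with the height function so that their relevant intersections remain nonempty and convex at every level $s$, keeping the nerve's top homology alive uniformly in $s$. Getting the combinatorics of which blankets cover which slab, and checking that lowering $s$ only enlarges (never disconnects or fills in) the pattern, is the crux; everything else is assembling standard Morse theory and the Strong Nerve Lemma around that computation.
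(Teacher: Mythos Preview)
Your overall architecture matches the paper's, but there is one genuine gap and one misplaced worry.

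The genuine gap: you assert that the pieces of the cover, being (intersections of) blankets, are combinatorially convex and hence contractible, so the Strong Nerve Lemma applies for free. But the complex being covered is not $X$; for cocompactness one must work in the truncation $X_{f\le 3n-3}$, and the cover pieces are $Y_i^\alpha \defeq Z_i^\alpha \cap X_{f\le 3n-3}^{0\le\chi}$. Neither $X_{f\le 3n-3}$ nor $X^{0\le\chi}$ is combinatorially convex in $X$, so these $Y_i^\alpha$ are not convex and there is no reason to expect them to be contractible. The paper only shows they (and their nonempty intersections) are $(m(\chi)-2)$-connected---just enough for the Strong Nerve Lemma with $n=m(\chi)$---via a real Morse-theoretic argument (Lemma~\ref{lem:good_pieces}): start from the contractible blanket intersection $Z$, pass to $Z_{f\le 3n-3}$ using high connectivity of $f$-descending links, then to $Z_{f\le 3n-3}^{0\le\chi}$ using high connectivity of $(\chi,f)$-ascending links, verifying at each step that the relevant links in $X$ already lie in $Z$. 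This is the technical heart of the proof, not a formality.

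The misplaced worry: you devote most of your effort to the ``essential'' persistence step, but this is dispatched in one stroke by Lemma~\ref{lem:drop_essential}. Because the $(\chi,f)$-ascending links in $X_{f\le 3n-3}$ are $(m(\chi)-2)$-connected (precisely the input from the positive direction, Theorem~\ref{thrm:pos}), the Morse Lemma makes every inclusion $X_{f\le 3n-3}^{t\le\chi}\hookrightarrow X_{f\le 3n-3}^{s\le\chi}$ an isomorphism in $\pi_k$ for $k\le m(\chi)-2$ and a surjection in $\pi_{m(\chi)-1}$. Hence if the filtration were essentially $(m(\chi)-1)$-connected, some single $X_{f\le 3n-3}^{s\le\chi}$ would be actually $(m(\chi)-1)$-connected, and after translating by an element of $H_n$ one may take $s=0$. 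No descending-link analysis or level-by-level nerve comparison is needed; it then suffices to produce nontrivial $\widetilde H_{m(\chi)-1}$ at the single level $0$, which the nerve argument does (and the paper only exhibits one embedded $(m(\chi)-1)$-sphere in $L$, using that $L$ is $(m(\chi)-1)$-dimensional, rather than identifying $L$ up to homotopy).
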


We will prove Theorem~\ref{thrm:BNSR_Houghton} in Section~\ref{sec:proof}.

\subsection{Consequences}\label{sec:consequences}

In this subsection we collect some easy consequences of the computation of the $\Sigma^m(H_n)$. First we have some results on finiteness properties of kernels of characters.

\begin{corollary}\label{cor:kernel}
For any $\chi \colon H_n \twoheadrightarrow \Z$ with $m=\min\{m(\chi),m(-\chi)\}$, the kernel of $\chi$ is of type $\F_{m-1}$ but not $\F_m$.
\end{corollary}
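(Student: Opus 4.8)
\textbf{Proof proposal for Corollary~\ref{cor:kernel}.} The plan is to deduce this directly from the full computation of $\Sigma^m(H_n)$ provided by Theorems~\ref{thrm:pos} and~\ref{thrm:BNSR_Houghton}, together with the general principle (Citation~\ref{cit:bnsr_fin_props}) relating finiteness properties of $\ker(\chi)$ to the membership of character classes in BNSR-invariants. Since $\ker(\chi)$ contains the commutator subgroup $[H_n,H_n]$ (as $\Z$ is abelian), Citation~\ref{cit:bnsr_fin_props} applies: $\ker(\chi)$ is of type $\F_k$ if and only if both $[\chi]$ and $[-\chi]$ lie in $\Sigma^k(H_n)$ (these being the only two classes in $\Sigma(H_n)$ vanishing on $\ker(\chi)$, since $\Hom(H_n,\R)/\langle\chi\rangle$-worth of characters vanishing on $\ker\chi$ is just the line $\R\chi$).

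First I would record that by Theorems~\ref{thrm:pos} and~\ref{thrm:BNSR_Houghton}, for any nonzero character $\psi$ we have exactly $[\psi]\in\Sigma^{j}(H_n)$ for $j\le m(\psi)-1$ and $[\psi]\notin\Sigma^j(H_n)$ for $j\ge m(\psi)$. Applying this to $\psi=\chi$ and $\psi=-\chi$: the class $[\chi]$ lies in $\Sigma^k(H_n)$ precisely when $k\le m(\chi)-1$, and $[-\chi]$ lies in $\Sigma^k(H_n)$ precisely when $k\le m(-\chi)-1$. Hence both lie in $\Sigma^k(H_n)$ precisely when $k\le \min\{m(\chi),m(-\chi)\}-1 = m-1$. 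By Citation~\ref{cit:bnsr_fin_props} this says exactly that $\ker(\chi)$ is of type $\F_{m-1}$ but not of type $\F_m$, as claimed. (One should note $m\ge 1$ always, since $m(\psi)\ge 1$ for any nonzero $\psi$, so the statement ``type $\F_0$ but not $\F_1$'' degenerates appropriately to the trivial finiteness property in the edge case, and in general $\ker(\chi)$ is a genuine group so is of type $\F_0$.)

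There is essentially no obstacle here beyond bookkeeping; the only point requiring a moment's care is the identification of the set of character classes vanishing on $\ker(\chi)$. Since $\chi\colon H_n\twoheadrightarrow\Z$, a character $\psi$ vanishes on $\ker(\chi)$ if and only if $\psi$ factors through $\chi$, i.e. $\psi=c\chi$ for some $c\in\R$; the nonzero such $\psi$ fall into exactly the two classes $[\chi]$ and $[-\chi]$ (which are distinct in $\Sigma(H_n)$ as $\chi\ne 0$). With that in hand Citation~\ref{cit:bnsr_fin_props} gives the equivalence, and the corollary follows immediately from the explicit description of the $\Sigma^m(H_n)$. One could alternatively phrase the whole argument in terms of the Bieri $\Sigma$-property (Corollary~\ref{cor:houghton_bieri}), but the direct route above is cleaner.
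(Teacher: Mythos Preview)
Your proposal is correct and follows exactly the same approach as the paper, which simply says the result ``is immediate from the computation of the $\Sigma^m(H_n)$ together with Citation~\ref{cit:bnsr_fin_props}.'' You have merely (and correctly) unpacked the bookkeeping that the paper leaves implicit, including the identification of $\{[\chi],[-\chi]\}$ as the set of character classes vanishing on $\ker(\chi)$.
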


\begin{proof}
This is immediate from the computation of the $\Sigma^m(H_n)$ together with Citation~\ref{cit:bnsr_fin_props}.
\end{proof}

\begin{corollary}\label{cor:kernels}
For each $1\le m\le n-1$, $H_n$ admits a map to $\Z$ whose kernel is of type $\F_{m-1}$ but not $\F_m$.
\end{corollary}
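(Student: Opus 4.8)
The plan is to reduce immediately to Corollary~\ref{cor:kernel}: that result says the kernel of a surjection $\chi\colon H_n\twoheadrightarrow\Z$ is of type $\F_{m-1}$ but not $\F_m$, where $m=\min\{m(\chi),m(-\chi)\}$, so it suffices to exhibit, for each prescribed value $m\in\{1,\dots,n-1\}$ (the range is empty when $n=1$, so assume $n\ge 2$), a single surjective integral character realizing that value. I would take
\[
\chi\defeq\chi_1+2\chi_2+\cdots+m\chi_m\in\Hom(H_n,\Z)\text{.}
\]
First I would check that $\chi$ is onto $\Z$, not merely nonzero: written in the basis $\{\chi_1,\dots,\chi_{n-1}\}$ of $\Hom(H_n,\Z)$ (legitimate since $m\le n-1$), the coefficient of $\chi_1$ in $\chi$ is $1$, so the gcd of its coefficients is $1$; since $(\chi_1,\dots,\chi_{n-1})$ maps $H_n$ onto $\Z^{n-1}$ (immediate from the definition of $H_n$: given a target vector, shift each ray by the prescribed amount far out and use a finitary permutation near the bases to patch things up, which is possible because the entries sum to $0$), this yields $\chi(H_n)=\Z$.

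Next I would compute $m(\chi)$ and $m(-\chi)$ directly from the definition. With $\chi=a_1\chi_1+\cdots+a_n\chi_n$ where $a_i=i$ for $1\le i\le m$ and $a_i=0$ for $m<i\le n$, we have $\max_j a_j=m$, attained only at $i=m$, so $m(\chi)=n-1$; likewise $-\chi$ has coefficients $-i$ for $i\le m$ and $0$ for $i>m$, with maximum $0$ attained on the $n-m$ indices $i>m$, so $m(-\chi)=m$. Hence $\min\{m(\chi),m(-\chi)\}=\min\{n-1,m\}=m$, and Corollary~\ref{cor:kernel} finishes the argument.

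There is essentially no obstacle here: all of the difficulty is already absorbed into Corollary~\ref{cor:kernel}, and thus into Theorems~\ref{thrm:pos} and~\ref{thrm:BNSR_Houghton}. What remains is only the elementary point that every value $m\in\{1,\dots,n-1\}$ occurs as $\min\{m(\chi),m(-\chi)\}$ for some character with image exactly $\Z$, which the explicit $\chi$ above witnesses. The one step deserving a moment's care is confirming surjectivity onto $\Z$ rather than onto a proper subgroup, which is precisely why the construction places the coefficient $1$ on $\chi_1$.
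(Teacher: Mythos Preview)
Your proof is correct and follows essentially the same route as the paper: both reduce to Corollary~\ref{cor:kernel} by exhibiting an explicit integral character $\chi$ with $m(\chi)=n-1$ and $m(-\chi)=m$. The paper uses $\chi=\chi_1+\cdots+\chi_{m-1}+2\chi_m$ rather than your $\chi_1+2\chi_2+\cdots+m\chi_m$, and does not pause to verify surjectivity onto $\Z$, so your version is if anything slightly more careful.
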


\begin{proof}
Choose any $\chi \colon H_n \twoheadrightarrow \Z$ with $m=\min\{m(\chi),m(-\chi)\}$, for example take $\chi=\chi_1+\cdots+\chi_{m(\chi)-1}+2\chi_m$, so $m(\chi)=n-1$ and $m(-\chi)=m$, and then Corollary~\ref{cor:kernel} gives the result.
\end{proof}

We can also conclude the following result about arbitrary normal subgroups.

\begin{corollary}\label{cor:fin_index}
A non-trivial normal subgroup of $H_n$ is of type $\F_{n-1}$ if and only if it has finite index in $H_n$.
\end{corollary}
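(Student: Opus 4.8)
The plan is to deduce Corollary~\ref{cor:fin_index} from the preceding corollaries together with standard facts about the subgroup structure of $H_n$. First I would recall that $H_n$ has a well-understood normal subgroup lattice: the commutator subgroup $[H_n,H_n]$ is the group $S_\infty$ of finitely supported permutations of $[n]\times\N$ (equivalently, the infinite finitary symmetric group on a countable set), the only proper non-trivial normal subgroups of $S_\infty$ are $A_\infty$ and $S_\infty$ itself, and the normal subgroups of $H_n$ are exactly $1$, $A_\infty$, $S_\infty$, the preimages in $H_n$ of subgroups of the abelianization $H_n/S_\infty\cong\Z^{n-1}$, and finite-index refinements thereof. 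The key point for us is that any non-trivial normal subgroup $N\trianglelefteq H_n$ either contains $A_\infty$ (hence contains $S_\infty=[H_n,H_n]$, since $A_\infty$ is normal of index $2$ in $S_\infty$ and $H_n$-conjugation does not fix $A_\infty$ when $n\ge 2$, so the normal closure of $A_\infty$ in $H_n$ is all of $S_\infty$), or else $N$ is finite; and in the latter case a finite group is trivially of type $\F_\infty$, but is a normal subgroup of infinite index, so the statement must really be addressed via the case $N\supseteq [H_n,H_n]$.

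Next, assuming $[H_n,H_n]\le N\le H_n$, I would invoke Citation~\ref{cit:bnsr_fin_props}: $N$ is of type $\F_{n-1}$ if and only if every $[\chi]\in\Sigma(H_n)$ with $\chi(N)=0$ lies in $\Sigma^{n-1}(H_n)$. For the ``if'' direction of the corollary, if $N$ has finite index then $\chi(N)=0$ forces $\chi=0$, so the condition is vacuous and $N$ is of type $\F_{n-1}$ (indeed one can also note directly that finite-index subgroups of a type $\F_{n-1}$ group are type $\F_{n-1}$). For the ``only if'' direction, suppose $N$ has infinite index; then $N/[H_n,H_n]$ is an infinite-index, hence non-finite-index, subgroup of $\Z^{n-1}$, so there is a non-trivial character $\chi$ of $H_n$ with $\chi(N)=0$. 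I then need to exhibit such a $\chi$ with $[\chi]\notin\Sigma^{n-1}(H_n)$. By Theorems~\ref{thrm:pos} and~\ref{thrm:BNSR_Houghton}, $[\chi]\in\Sigma^{m(\chi)-1}(H_n)\setminus\Sigma^{m(\chi)}(H_n)$, so $[\chi]\notin\Sigma^{n-1}(H_n)$ as soon as $m(\chi)\le n-1$; but $m(\chi)\le n-1$ holds automatically for every non-zero character of $H_n$, since by definition $m(\chi)=|\{i\mid a_i<\max_j a_j\}|$ and not all $a_i$ can be strictly below the maximum. Hence $[\chi]\notin\Sigma^{n-1}(H_n)$, and by Citation~\ref{cit:bnsr_fin_props} $N$ is not of type $\F_{n-1}$.

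The only genuinely delicate point is verifying that a non-trivial normal subgroup of $H_n$ that is not of finite index must contain $[H_n,H_n]=S_\infty$ (the remaining normal subgroups being finite), so that Citation~\ref{cit:bnsr_fin_props} applies and the abelianization argument goes through; this is where I would be most careful, and I would either cite a reference for the normal subgroup structure of $H_n$ or give a short self-contained argument: a non-trivial normal subgroup $N$ contains a non-trivial element $\eta$, and if $\eta$ moves infinitely many points of some $\{i\}\times\N$ then commutators $[\eta,g]$ with $g$ ranging over $H_n$ generate $A_\infty$, hence (taking the normal closure in $H_n$) all of $S_\infty$; while if $\eta$ has finite support then the normal closure of $\eta$ in $H_n$ is either contained in $S_\infty$ and, being a non-trivial normal subgroup of $S_\infty$, equals $A_\infty$ or $S_\infty$ unless $\eta$ itself generates a finite normal subgroup. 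Once this dichotomy is in place, everything else is a routine application of the cited results, and no further computation is needed.
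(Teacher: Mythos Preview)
Your overall strategy coincides with the paper's: show that a non-trivial infinite-index normal subgroup $N$ is (up to finite index) squeezed between $[H_n,H_n]$ and $H_n$, then apply Citation~\ref{cit:bnsr_fin_props} together with $\Sigma^{n-1}(H_n)=\emptyset$. The computation that $m(\chi)\le n-1$ for every non-zero $\chi$, hence $\Sigma^{n-1}(H_n)=\emptyset$, is correct.

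However, the reduction step contains a genuine error. Your parenthetical claim that ``$H_n$-conjugation does not fix $A_\infty$'' is false: conjugation by any bijection of $[n]\times\N$ preserves the cycle type, and hence the parity, of a finitely supported permutation, so $A_\infty$ is normal in all of $\Symm([n]\times\N)$ and in particular in $H_n$. Thus $N=A_\infty$ is itself a non-trivial normal subgroup of infinite index that does \emph{not} contain $S_\infty=[H_n,H_n]$, and your implication ``$N\supseteq A_\infty \Rightarrow N\supseteq S_\infty$'' fails. Consequently Citation~\ref{cit:bnsr_fin_props} cannot be invoked directly for such $N$. The paper's remedy is exactly the move you are missing: once one knows $A_\infty\le N$ (this is what is cited from \cite{zaremsky17}), one replaces $N$ by the finite-index supergroup $NS_\infty$, which now contains $[H_n,H_n]$; since type~$\F_{n-1}$ is unchanged under passage to or from finite-index subgroups, this is harmless. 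Relatedly, your residual ``or else $N$ is finite'' alternative must be ruled out rather than left dangling: every non-trivial normal subgroup of $H_n$ contains $A_\infty$ and is therefore infinite, which is essential since, as you observe, a non-trivial finite normal subgroup would directly contradict the corollary.
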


\begin{proof}
The thing to prove is that if $N$ is a non-trivial normal subgroup of $H_n$ with infinite index, then $N$ is not of type $\F_{n-1}$. As explained at the end of \cite{zaremsky17}, $N$ must contain the second derived subgroup of $H_n$, which is the finite-support alternating group on $[n]\times\N$. Up to replacing $N$ with a finite index supergroup we can assume it contains the derived subgroup of $H_n$, which is the finite-support symmetric group on $[n]\times\N$. Being an infinite index subgroup of $H_n$ containing the commutator subgroup, we see that $N$ lies in the kernel of a character $\chi\colon H_n \twoheadrightarrow \Z$, and since $[\chi]\not\in \Sigma^{n-1}(H_n)$ (since the computation shows $\Sigma^{n-1}(H_n)=\emptyset$), Citation~\ref{cit:bnsr_fin_props} says $N$ is not of type $\F_{n-1}$.
\end{proof}

Another consequence of our computation is that the Houghton groups all have the Bieri $\Sigma$-property (Definition~\ref{def:bieri}).

\begin{corollary}\label{cor:houghton_bieri}
Every $H_n$ has the Bieri $\Sigma$-property.
\end{corollary}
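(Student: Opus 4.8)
The plan is to unwind the full computation of the $\Sigma^m(H_n)$ supplied by Theorems~\ref{thrm:pos} and~\ref{thrm:BNSR_Houghton} and match it, level by level, against the polyhedral prediction of Definition~\ref{def:bieri}. Since $H_n$ is of type $\F_{n-1}$ but not $\F_n$, I need only verify the Bieri $\Sigma^m$-property for $1\le m\le n-1$; the case $m=1$ holds for every finitely generated group (which disposes of $n\le 2$), so I may assume $n\ge 3$ and $2\le m\le n-1$. Combining the two theorems with the nesting $\Sigma^k(H_n)\subseteq\Sigma^{k-1}(H_n)$ shows that $[\chi]\in\Sigma^k(H_n)$ precisely when $k\le m(\chi)-1$, so that
\[
\Sigma^m(H_n)^c=\{[\chi]\in\Sigma(H_n)\mid m(\chi)\le m\}\text{;}
\]
in particular $\Sigma^1(H_n)^c=\{[-\chi_1],\dots,[-\chi_n]\}$, a set of $n$ points which for $n\ge 3$ are pairwise non-antipodal, since any two of $\chi_1,\dots,\chi_n$ are linearly independent.

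It then remains to identify $\conv_{\le m}\{[-\chi_1],\dots,[-\chi_n]\}$ with $\{[\chi]\mid m(\chi)\le m\}$. The structural fact I would lean on is that every proper subset of $\{\chi_1,\dots,\chi_n\}$ is linearly independent, the only linear relation among the $\chi_i$ being $\chi_1+\cdots+\chi_n=0$. Consequently, for any $I\subseteq[n]$ with $|I|\le n-1$ the classes $\{[-\chi_i]\mid i\in I\}$ span a simplicial cone lying in an open half-space of $\Hom(H_n,\R)$, so their spherical convex hull is well defined and consists exactly of the classes $[-\sum_{i\in I}t_i\chi_i]$ with $t_i\ge 0$ not all zero.

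With this, the two inclusions are short. Given $[\chi]$ with $m(\chi)\le m$, I would write $\chi=\sum_j a_j\chi_j$ in standard form, so $\max_j a_j=0$; then $I\defeq\{j\mid a_j<0\}$ has $|I|=m(\chi)\le m$ and $\chi=\sum_{i\in I}(-a_i)(-\chi_i)$, placing $[\chi]$ in the convex hull of $\{[-\chi_i]\mid i\in I\}$ and hence in $\conv_{\le m}\Sigma^1(H_n)^c$. Conversely, a point of $\conv\{[-\chi_i]\mid i\in I\}$ with $|I|\le m$ is represented by $\chi=-\sum_{i\in I}t_i\chi_i$ with $t_i\ge 0$ not all zero; since $|I|\le m<n$, the set $I$ is proper, so some coordinate of $\chi$ outside $I$ vanishes and realizes the maximum, forcing $m(\chi)=|\{i\in I\mid t_i>0\}|\le|I|\le m$, i.e.\ $[\chi]\in\Sigma^m(H_n)^c$. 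This establishes the Bieri $\Sigma^m$-property for each relevant $m$, hence the Bieri $\Sigma$-property.

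I do not expect a genuine obstacle here: the real work, namely the determination of the $\Sigma^m(H_n)$, is already carried out in Theorems~\ref{thrm:pos} and~\ref{thrm:BNSR_Houghton}. The only mildly delicate points are the well-definedness of the spherical convex hulls (secured by the linear independence of proper subsets of the $\chi_i$) and keeping track of the standard-form normalization of characters when passing between $m(\chi)$ and the coefficient data, both of which are routine.
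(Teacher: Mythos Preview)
Your proof is correct and follows essentially the same approach as the paper: both identify $\Sigma^m(H_n)^c$ with $\{[\chi]\mid m(\chi)\le m\}$ via Theorems~\ref{thrm:pos} and~\ref{thrm:BNSR_Houghton}, identify $\Sigma^1(H_n)^c$ with $\{[-\chi_1],\dots,[-\chi_n]\}$, and then check that $\conv_{\le m}$ of the latter set is exactly $\{[\chi]\mid m(\chi)\le m\}$. The paper compresses this last identification into the phrase ``thanks to standard forms,'' whereas you spell out both inclusions, the non-antipodality condition, and the restriction to $m\le n-1$; this is added care rather than a different argument.
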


\begin{proof}
We know that $\Sigma^1(H_n)^c=\{[-\chi_1],\dots,[-\chi_n]\}$ (this was already known in \cite[Proposition~8.3]{brown87}). Hence thanks to standard forms, $\conv_{\le m}\Sigma^1(H_n)^c$ equals the set of $[\chi]$ with $m(\chi)\le m$. By Theorems~\ref{thrm:pos} and~\ref{thrm:BNSR_Houghton}, it is also the case that $[\chi]\in \Sigma^m(H_n)^c$ if and only if $m(\chi)\le m$.
\end{proof}

From the computation it is also clear that we can triangulate $\Sigma(H_n)$ into the boundary of an $(n-1)$-simplex in such a way that $\Sigma^m(H_n)^c$ is precisely the $(m-1)$-skeleton of this simplex. This is an example of the ``polyhedral'' behavior we indicated before defining the Bieri $\Sigma$-property.

\section{Complexes}\label{sec:cpx}

In this section we recall the $\CAT(0)$ cube complex on which $H_n$ acts, and define an important family of $\CAT(0)$ subcomplexes called blankets (Definition~\ref{def:blanket}).

\subsection{Cube complex}\label{sec:cube}

There is a natural cube complex $X_n$ on which $H_n$ acts, which we recall now. Everything in this subsection is taken from \cite{zaremsky17}. First define $M_n$ to be the monoid of injections $\phi\colon [n]\times\N \to [n]\times\N$ that are eventual translations (i.e., satisfy the same condition as elements of $H_n$ except they need not be surjective), so $H_n$ consists precisely of the bijective elements of $M_n$. The $0$-skeleton of $X_n$ is defined to be $M_n$.

To define the $1$-skeleton of $X_n$ we need to recall some important elements $t_i$ of $M_n$. For each $1\le i\le n$ define $t_i\in M_n$ to be
\[
t_i \colon (j,x) \mapsto \left\{\begin{array}{ll} (j,x) &\text{ if } j\ne i \\ (j,x+1) &\text{ if } j=i \end{array}\right.\text{.}
\]
Now declare that two $0$-cubes $\phi,\psi$ in $X_n$ (i.e., elements of $M_n$) span a $1$-cube whenever $\phi=t_i\circ \psi$ or $\psi=t_i\circ \phi$ for some $1\le i\le n$. Such a $1$-cube is \emph{labeled by $t_i$}. (Since our maps act from the right $t_i\circ \phi$ means \emph{pre}compose with $t_i$.) We define the higher dimensional cubes of $X_n$ by declaring that for every $\phi\in M_n$ and every $K\subseteq [n]$ there is a $|K|$-cube spanned by
\[
\left\{\left.\left(\prod_{i\in I}t_i\right)\circ\phi \right| I\subseteq K\right\}\text{.}
\]
Note that the $t_i$ all commute with each other, so specifying an order in the product is unnecessary. Since more subscripts and superscripts will soon appear, we will now write $X$ for $X_n$, and there should be no risk of ambiguity.

It is known that $X$ is a $\CAT(0)$ cube complex. The group $H_n$ acts on $M_n$ from the right via $(\phi)\eta \defeq \phi \circ \eta$, and this extends to an action of $H_n$ on $X$. Each cube stabilizer is finite. There is an $H_n$-invariant Morse function $f\colon X \to \R$ defined on $X^{(0)}$ by
\[
f(\phi)\defeq |([n]\times\N)\setminus \image(\phi)|\text{.}
\]
Each sublevel set $X^{f\le q}$ is $H_n$-cocompact. Note that $X^{f\le 0}$ (that is to say $X^{f=0}$) is precisely $H_n$.

Since elements of $M_n$ are eventual translations just like elements of $H_n$, any character $\chi\colon H_n\to \R$ naturally extends to a monoid homomorphism $\chi\colon M_n\to \R$ given by the same definition as on $H_n$. Then viewing $M_n$ as $X^{(0)}$, any $\chi$ extends to a continuous map $\chi \colon X\to\R$. The lexicographically ordered function $(\chi,f)$ is a Morse function in the sense of Definition~\ref{def:morse} on any $X^{f\le q}$.

\subsection{Blankets}\label{sec:blankets}

Blankets are certain subcomplexes of $X$ that we will use later to cover the complex $X^{0\le \chi}$. The definition is as follows.

\begin{definition}[Blanket]\label{def:blanket}
For $K\subseteq [n]$ consider the subcomplex $\displaystyle\bigcap\limits_{i\in K}X^{\chi_i \le 0}$ of $X$. We will call any connected component of such a subcomplex a \emph{$K$-blanket}, and generally refer to $K$-blankets for arbitrary $K$ as \emph{blankets}.
\end{definition}

Recall that a subcomplex $Z$ of a $\CAT(0)$ cube complex $Y$ is \emph{locally combinatorially convex} if every link in $Z$ of a $0$-cube $z\in Z$ is a full subcomplex of the link of $z$ in $Y$, and \emph{combinatorially convex} if it is connected and locally combinatorially convex. It is well known (for example see \cite[Lemma~2.12]{haglund08}) that combinatorially convex subcomplexes are themselves $\CAT(0)$, hence contractible. In particular each connected component of a locally combinatorially convex subcomplex is contractible.

\begin{lemma}[Blankets are $\CAT(0)$]\label{lem:blankets_cat0}
For any $K$, $\displaystyle\bigcap\limits_{i\in K}X^{\chi_i \le 0}$ is locally combinatorially convex. In particular, blankets are combinatorially convex in $X$, hence $\CAT(0)$ and contractible.
\end{lemma}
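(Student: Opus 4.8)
The claim has two parts: local combinatorial convexity of the intersection $Z_K \defeq \bigcap_{i\in K}X^{\chi_i\le 0}$, and the deduction that blankets (its components) are $\CAT(0)$ and contractible. The second part is immediate from the cited fact that combinatorially convex subcomplexes of $\CAT(0)$ cube complexes are $\CAT(0)$ (hence contractible), together with the definition of a blanket as a connected component of $Z_K$, so the whole content is in the first part. The plan is to reduce to the case $|K|=1$: since an intersection of locally combinatorially convex subcomplexes is again locally combinatorially convex (the link of a $0$-cube in the intersection is the intersection of the links, and an intersection of full subcomplexes is full), it suffices to show that each $X^{\chi_i\le 0}$ is locally combinatorially convex in $X$.

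\textbf{The key step.} So I would fix $i$ and a $0$-cube $\phi$ lying in $X^{\chi_i\le 0}$, and analyze the link of $\phi$ in $X$. By the cube structure, the link of $\phi$ in $X$ is a simplicial complex whose vertices correspond to the $1$-cubes at $\phi$, i.e.\ to edges labeled by some $t_j$, in either the ``up'' direction ($t_j\circ\phi$) or the ``down'' direction (when $\phi = t_j\circ\psi$ for some $\psi$, giving the edge to $\psi$); a set of such vertices spans a simplex exactly when the corresponding $1$-cubes span a cube, which by the definition of higher cubes means the labels $t_j$ are distinct and the directions are compatible in the obvious way. Now I want to identify which of these link-vertices lie in $X^{\chi_i\le 0}$. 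Moving along a $t_j$-edge changes $\chi_i$ by $0$ if $j\ne i$ (since $t_j$ fixes $\{i\}\times\N$ pointwise, its $i$-th translation number is $0$, so $\chi_i(t_j)=0$), and changes $\chi_i$ by $\pm 1$ if $j=i$ (with $\chi_i(t_i)=1$). Hence from $\phi$ with $\chi_i(\phi)\le 0$: every $t_j$-neighbor with $j\ne i$ stays in $X^{\chi_i\le 0}$; the ``down'' $t_i$-neighbor $\psi$ (with $\phi=t_i\circ\psi$, so $\chi_i(\psi)=\chi_i(\phi)-1\le -1\le 0$) stays in; and the ``up'' $t_i$-neighbor $t_i\circ\phi$ has $\chi_i = \chi_i(\phi)+1$, which may or may not be $\le 0$. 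So the link of $\phi$ in $X^{\chi_i\le 0}$ is the full subcomplex of $\lk_X\phi$ spanned by all link-vertices \emph{except} possibly the single vertex corresponding to the up-$t_i$-edge, which is present precisely when $\chi_i(\phi)\le -1$ and absent precisely when $\chi_i(\phi)=0$. Either way it is a \emph{full} subcomplex (in the first sub-case it is everything, in the second it is the complement of a single vertex), which is exactly local combinatorial convexity at $\phi$.

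\textbf{Main obstacle.} The only real care needed is the bookkeeping about the link of $\phi$ in $X$: one must be precise that the up- and down-directions along a single label $t_j$ give genuinely distinct link-vertices and that the simplices of $\lk_X\phi$ are governed by choosing a subset $K'\subseteq[n]$ of labels together with a choice of direction for each, so that removing the single up-$t_i$ vertex from the vertex set automatically removes all simplices containing it and leaves a full subcomplex. This is routine given the explicit cube structure of $X$ described above, but it is where an error would most easily creep in; I would state it carefully, perhaps recording once and for all the combinatorial description of $\lk_X\phi$ before intersecting with the half-spaces. Finally, I would note that the passage from ``locally combinatorially convex'' to ``the components are combinatorially convex, hence $\CAT(0)$ and contractible'' is exactly the cited \cite[Lemma~2.12]{haglund08} applied componentwise, completing the proof.
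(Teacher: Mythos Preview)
Your proposal is correct and follows essentially the same approach as the paper: reduce to a single $X^{\chi_i\le 0}$, observe how $\chi_i$ changes along $t_j$-edges, and deduce that the link of $\phi$ in $X^{\chi_i\le 0}$ is full in $\lk_X\phi$. The paper packages the fullness check slightly differently---noting that the extreme values of $\chi_i$ on any cube containing $\phi$ are already achieved at $\phi$ or one of its neighbors in that cube---but this is exactly the same observation as your analysis of which link-vertices survive and why removing the up-$t_i$ vertex yields a full subcomplex.
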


\begin{proof}
It is enough to show that each $X^{\chi_i \le 0}$ is locally combinatorially convex. Note that if $\phi,\psi$ are adjacent $0$-cubes in $X$, say with $\psi=t_j\circ \phi$, then $\chi_i(\psi)-\chi_i(\phi)$ is $0$ if $i\ne j$ and $1$ if $i=j$. Thus if we have a cube $C$ containing $\phi$ and $\psi_1,\dots,\psi_r$ are the $0$-faces of $C$ adjacent to $\phi$, then the maximum and minimum values of $\chi_i$ on $C$ lie in $\{\chi_i(\phi),\chi_i(\psi_1),\dots,\chi_i(\psi_r)\}$. In particular if $\phi\in X^{\chi_i \le 0}$ and these $\psi_i$ lie in the link of $\phi$ in $X^{\chi_i \le 0}$, then all of $C$ lies in $X^{\chi_i \le 0}$. This shows that the link of $\phi$ in $X^{\chi_i \le 0}$ is a full subcomplex of the link of $\phi$ in $X$.
\end{proof}

\begin{corollary}[Intersections of blankets are blankets]\label{cor:sheet_int}
Let $Z_1,\dots,Z_r$ be blankets, say with $Z_i$ a $K_i$-blanket. Then if $Z_1\cap\cdots\cap Z_r$ is non-empty it is contractible, and in fact is a $(K_1\cup\cdots\cup K_r)$-blanket.
\end{corollary}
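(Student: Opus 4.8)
The plan is to show that a nonempty intersection $Z_1 \cap \cdots \cap Z_r$ of blankets coincides with a single connected component of $\bigcap_{i \in K_1 \cup \cdots \cup K_r} X^{\chi_i \le 0}$, i.e.\ a $(K_1 \cup \cdots \cup K_r)$-blanket, from which contractibility is immediate by Lemma~\ref{lem:blankets_cat0}. The only content is really a bookkeeping fact about which $0$-cubes and cubes lie where, so I would first reduce to the set-theoretic statement that, as subcomplexes of $X$,
\[
\bigcap_{j=1}^r Z_j \subseteq \bigcap_{i \in K_1 \cup \cdots \cup K_r} X^{\chi_i \le 0}\text{,}
\]
which is clear since $Z_j \subseteq \bigcap_{i \in K_j} X^{\chi_i \le 0}$ for each $j$, and then argue that $\bigcap_j Z_j$ is in fact a \emph{union of connected components} of the right-hand side.

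The key step is this last point. First I would observe that $\bigcap_j Z_j$ is a subcomplex of $X$ (an intersection of subcomplexes), and that it is open in $\bigcap_{i \in K_1\cup\cdots\cup K_r} X^{\chi_i\le 0}$ in the subcomplex sense: if a cube $C$ of $\bigcap_{i} X^{\chi_i \le 0}$ meets $\bigcap_j Z_j$, then $C$ lies in $\bigcap_j Z_j$. To see this, note $C \subseteq \bigcap_{i \in K_j} X^{\chi_i \le 0}$ for each $j$, and $C$ is connected and meets the connected component $Z_j$ of $\bigcap_{i \in K_j} X^{\chi_i \le 0}$, hence $C \subseteq Z_j$; this holds for all $j$, so $C \subseteq \bigcap_j Z_j$. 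Thus $\bigcap_j Z_j$ is both a subcomplex and a union of components. Finally, $\bigcap_j Z_j$ is connected (it is nonempty by hypothesis, and being an intersection of the combinatorially convex, hence $\CAT(0)$-geodesically-convex, sets $Z_j$ inside the $\CAT(0)$ space $X$, it is geodesically convex, hence connected) — so it is exactly one component, i.e.\ a $(K_1 \cup \cdots \cup K_r)$-blanket.

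I expect the mild obstacle to be making the connectedness argument clean: one should either invoke that blankets are combinatorially convex subcomplexes of $X$ and that combinatorial convexity corresponds to $\CAT(0)$ convexity of the realization (so a finite intersection is again convex, hence path-connected), or, alternatively, sidestep metric convexity entirely by noting that $\bigcap_{i \in K_1\cup\cdots\cup K_r} X^{\chi_i \le 0}$ is itself locally combinatorially convex by Lemma~\ref{lem:blankets_cat0}, so each of its components is contractible; then it suffices to know $\bigcap_j Z_j$ meets only one such component, which follows because any two $0$-cubes of $\bigcap_j Z_j$ are joined by a path in $Z_1$ (a component, hence connected) that, being a path in the subcomplex $\bigcap_{i\in K_1} X^{\chi_i\le 0}$, already — no, this needs the path to stay in \emph{all} the $Z_j$. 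The cleanest route is the metric one: since each $Z_j$ is combinatorially convex, its geometric realization is a convex subset of the $\CAT(0)$ space $X$, so $\bigcap_j |Z_j|$ is convex and nonempty, hence path-connected, and therefore lies in a single component of $\bigcap_i X^{\chi_i \le 0}$. Combined with the ``union of components'' observation above, this forces equality with that one component, completing the proof.
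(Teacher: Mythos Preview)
Your argument is correct and follows essentially the same route as the paper: both proofs hinge on the fact that combinatorially convex subcomplexes of a $\CAT(0)$ cube complex are metrically convex, so a nonempty intersection of blankets is connected, and then one checks (as you do via the ``union of components'' observation, and as the paper does via ``contains a blanket for trivial topological reasons'') that this connected set coincides with a single $(K_1\cup\cdots\cup K_r)$-blanket. The paper is terser---it simply asserts that an intersection of combinatorially convex subcomplexes is combinatorially convex---but this assertion relies on exactly the metric-convexity fact you invoke explicitly; your abandoned alternative route was indeed a dead end, so settling on the metric argument was the right call.
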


\begin{proof}
Since each $Z_i$ is combinatorially convex, any non-empty $Z_1\cap\cdots\cap Z_r$ is combinatorially convex, hence contractible. Moreover, as a (contractible hence) connected subcomplex of $\displaystyle\bigcap\limits_{i\in K_1\cup\cdots\cup K_r}X^{\chi_i \le 0}$ we know it lies in a $(K_1\cup\cdots\cup K_r)$-blanket. It also contains a $(K_1\cup\cdots\cup K_r)$-blanket for trivial (general topological) reasons, and hence it must equal a $(K_1\cup\cdots\cup K_r)$-blanket.
\end{proof}

\section{The proof}\label{sec:proof}

In this section we will use blankets and the Morse function $(\chi,f)$ to prove our main result, Theorem~\ref{thrm:BNSR_Houghton}. Without loss of generality $\chi=a_1\chi_1+\cdots+a_n\chi_n$ is in ascending standard form, so $a_1\le\cdots\le a_n=0$. Let us write $X_{f\le k}$ for $X^{f\le k}$ and $X_{f\le k}^{t\le\chi}$ for $X_{f\le k}\cap X^{t\le \chi}$. As a remark, in what follows we may occasionally implicitly assume $n\ge 2$; the only character of $H_1$ is $0$, so while our main results are (vacuously) true for $n=1$, some of the arguments used in this section may not literally be true for $n=1$.

\begin{lemma}\label{lem:drop_essential}
If $X_{f\le 3n-3}^{0\le\chi}$ is not $(m(\chi)-1)$-connected then $[\chi]\in \Sigma^{m(\chi)}(H_n)^c$.
\end{lemma}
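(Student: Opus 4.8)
\textbf{Proof plan for Lemma~\ref{lem:drop_essential}.}
The plan is to reduce the statement about the BNSR-invariant $\Sigma^{m(\chi)}(H_n)$ to the single piece of connectivity data furnished by the hypothesis, using the Morse function $(\chi,f)$ together with the fact that each sublevel set $X_{f\le k}$ is an $H_n$-cocompact, contractible (indeed $\CAT(0)$) cube complex. First I would fix $k$ large enough — the choice $k=3n-3$ is dictated by later sections — and observe that $X_{f\le k}$ is an admissible model space in the sense of Definition~\ref{def:BNSR}: the action is proper (finite cube stabilizers), cocompact on sublevel sets, and $X_{f\le k}$ is $(m(\chi)-1)$-connected (it is even contractible, being a convex subcomplex of the $\CAT(0)$ complex $X$ — or one can cite that $X_{f\le k}$ is highly connected from \cite{zaremsky17}). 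The restriction of $\chi$ to $X_{f\le k}$ serves as a character height function $h_\chi$, since $\chi(g.y)=\chi(g)+\chi(y)$ by the equivariance noted in Section~\ref{sec:cube}. Hence $[\chi]\in\Sigma^{m(\chi)}(H_n)$ would mean, by definition, that the filtration $(X_{f\le k}^{t\le\chi})_{t\in\R}$ is essentially $(m(\chi)-1)$-connected.

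Next I would exploit the two features that make $t=0$ a representative slice. On one hand, $H_n$ acts on $X_{f\le k}$ by translating $\chi$-values along $\Z$ (since every $\chi_i$, hence $\chi$, is $\Z$-valued on $M_n=X^{(0)}$, and $H_n$ surjects onto a suitable subgroup of $\Z$ under $\chi$ whenever $\chi\ne 0$), so the superlevel complexes $X_{f\le k}^{t\le\chi}$ for $t$ in a fixed coset of $\chi(H_n)$ are all $H_n$-equivariantly, hence topologically, the same. In particular it suffices to analyze the single complex $X_{f\le k}^{0\le\chi}$. On the other hand, essential $(m(\chi)-1)$-connectivity requires that for $t=0$ there exist $s\le 0$ such that $X_{f\le k}^{0\le\chi}\to X_{f\le k}^{s\le\chi}$ is trivial on $\pi_j$ for $j\le m(\chi)-1$; but by the translation action the map $X_{f\le k}^{0\le\chi}\to X_{f\le k}^{s\le\chi}$ is conjugate (via an element $g\in H_n$ with $\chi(g)=-s$) to the inclusion $X_{f\le k}^{-s\le\chi}\to X_{f\le k}^{0\le\chi}$, and — more to the point — I would use the Morse function $(\chi,f)$ together with Lemma~\ref{lem:morse} to show that the inclusion $X_{f\le k}^{0\le\chi}\hookrightarrow X_{f\le k}$ already induces an isomorphism on $\pi_j$ for $j\le m(\chi)-1$, provided the ascending links under $(\chi,f)$ at $0$-cells of non-maximal $\chi$-value are $(m(\chi)-1)$-connected (which is part of what the positive result Theorem~\ref{thrm:pos} is built on). Granting this, $X_{f\le k}^{0\le\chi}$ is a ``retract-like'' slice whose homotopy groups in the relevant range agree with those of $X_{f\le k}$, and likewise for each deeper slice, so the essential-connectivity connecting maps are forced to be (up to the $H_n$-translation identification) the connecting maps in a constant tower — in particular, if $X_{f\le k}^{0\le\chi}$ fails to be $(m(\chi)-1)$-connected, no amount of dropping $t$ can kill the offending $\pi_j$-class, because every map in the tower is an isomorphism in degrees $\le m(\chi)-1$. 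This contradicts essential $(m(\chi)-1)$-connectivity, giving $[\chi]\in\Sigma^{m(\chi)}(H_n)^c$.

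To organize the argument cleanly I would: (1) record that $X_{f\le 3n-3}$ is a valid Definition~\ref{def:BNSR} model for $\Sigma^{m(\chi)}(H_n)$ with $h_\chi=\chi|_{X_{f\le 3n-3}}$; (2) use the $H_n$-translation in the $\chi$-direction to identify all superlevel complexes $X_{f\le 3n-3}^{t\le\chi}$ within a $\chi(H_n)$-coset, reducing essential connectivity to the behavior of the single inclusion $X_{f\le 3n-3}^{0\le\chi}\hookrightarrow X_{f\le 3n-3}$; (3) apply the Morse Lemma~\ref{lem:morse} with the function $(\chi,f)$ to argue this inclusion is a $\pi_j$-isomorphism for $j\le m(\chi)-1$, using the ascending-link connectivity that underlies Theorem~\ref{thrm:pos}; (4) conclude that if $X_{f\le 3n-3}^{0\le\chi}$ is not $(m(\chi)-1)$-connected then neither is any $X_{f\le 3n-3}^{s\le\chi}$, and the inclusion-induced maps between them are $\pi_j$-isomorphisms in the relevant range, so the filtration is \emph{not} essentially $(m(\chi)-1)$-connected, hence $[\chi]\notin\Sigma^{m(\chi)}(H_n)$.

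The main obstacle I anticipate is step (3): being careful about the direction of the Morse function and exactly which ascending/descending links are controlled. The function $(\chi,f)$ does not obviously have $(m(\chi)-1)$-connected ascending links at \emph{every} relevant $0$-cell — the delicate point (and the reason the bound $3n-3$ appears, and the reason the blanket machinery of Section~\ref{sec:cpx} is needed at all) is that the genuinely new content of the paper is showing $X_{f\le 3n-3}^{0\le\chi}$ is not highly connected; the present lemma is meant only to package ``not $(m(\chi)-1)$-connected for this specific slice'' into ``not in $\Sigma^{m(\chi)}$.'' So in writing step (3) I would lean on the fact that the \emph{positive} direction (Theorem~\ref{thrm:pos}, from \cite{zaremsky17}) already establishes that dropping $t$ cannot improve connectivity beyond degree $m(\chi)-1$ — equivalently, that the superlevel complexes are ``uniformly $(m(\chi)-2)$-connected but not better'' — so that a single failure of $(m(\chi)-1)$-connectivity at $t=0$ propagates. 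If a fully self-contained argument for step (3) proves awkward, the fallback is to invoke essential connectivity directly: the translation action shows the tower $(X_{f\le 3n-3}^{t\le\chi})_t$ is cofinally ``periodic,'' so essential $(m(\chi)-1)$-connectivity of the tower is equivalent to genuine $(m(\chi)-1)$-connectivity of a single slice $X_{f\le 3n-3}^{0\le\chi}$, and the lemma follows immediately from the hypothesis.
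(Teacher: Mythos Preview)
Your overall architecture matches the paper's proof exactly: argue by contrapositive, use $X_{f\le 3n-3}$ as the Definition~\ref{def:BNSR} model, invoke the Morse Lemma for the $(\chi,f)$-ascending links to control the inclusions $X_{f\le 3n-3}^{t\le\chi}\hookrightarrow X_{f\le 3n-3}^{s\le\chi}$, and then translate by an element of $H_n$ to identify the resulting slice with the $t=0$ slice.

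There is, however, a genuine off-by-one error in your step (3). You assert that the $(\chi,f)$-ascending links are $(m(\chi)-1)$-connected and hence that the inclusions are $\pi_j$-isomorphisms for $j\le m(\chi)-1$. That is too strong: the result underlying Theorem~\ref{thrm:pos} (namely \cite[Proposition~6.6]{zaremsky17}) gives only that these ascending links are $(m(\chi)-2)$-connected. Indeed, if they were $(m(\chi)-1)$-connected then Morse theory would force $X_{f\le 3n-3}^{0\le\chi}$ to be $(m(\chi)-1)$-connected, making the hypothesis of the lemma vacuous. What the Morse Lemma actually yields is an isomorphism on $\pi_j$ for $j\le m(\chi)-2$ and a \emph{surjection} on $\pi_{m(\chi)-1}$. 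The paper's key step, which your plan is missing, is then: essential $(m(\chi)-1)$-connectivity says the inclusion-induced map on $\pi_{m(\chi)-1}$ is trivial for some $s\le t$, and a surjection that is trivial forces the target $\pi_{m(\chi)-1}(X_{f\le 3n-3}^{s\le\chi})$ to vanish; combined with the lower isomorphisms this makes $X_{f\le 3n-3}^{s\le\chi}$ genuinely $(m(\chi)-1)$-connected, and translation then gives the same for the $0$-slice.

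Your fallback (``periodicity of the tower implies essential connectivity is equivalent to genuine connectivity of one slice'') does not stand on its own: periodicity tells you the slices at $\chi(H_n)$-translates are homeomorphic, but says nothing about the inclusion maps between them, so a nontrivial class could in principle die under inclusion without any slice being $(m(\chi)-1)$-connected. You really do need the Morse input, at the correct degree, to rule that out. Two minor points: $X_{f\le 3n-3}$ is not obviously a convex subcomplex of $X$ (the paper only uses that it is $(n-2)$-connected, via descending-link Morse theory), and $\chi$ need not be $\Z$-valued on $M_n$ when the coefficients $a_i$ are irrational; the paper handles the translation step by simply taking $s\in\chi(H_n)$.
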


\begin{proof}
We proceed by contrapositive. If $[\chi]\in \Sigma^{m(\chi)}(H_n)$, then since $X_{f\le 3n-3}$ is $H_n$-cocompact we know the filtration $(X_{f\le 3n-3}^{t\le\chi})_{t\in\R}$ is essentially $(m(\chi)-1)$-connected. By \cite[Proposition~6.6]{zaremsky17} every ascending link with respect to $(\chi,f)$ of a $0$-cube in $X_{f\le 3n-3}$ is $(m(\chi)-2)$-connected, so by Lemma~\ref{lem:morse} the inclusion $X_{f\le 3n-3}^{t\le\chi} \to X_{f\le 3n-3}^{s\le\chi}$ (for any $s\le t$) induces an isomorphism in $\pi_k$ for all $k\le m(\chi)-2$ and a surjection in $\pi_{m(\chi)-1}$. We are assuming that for all $t$ there exists $s\le t$ such that this inclusion induces the trivial map in $\pi_k$ for all $k\le m(\chi)-1$, so in fact for such $s$ the complex $X_{f\le 3n-3}^{s\le\chi}$ is $(m(\chi)-1)$-connected. Without loss of generality $s\in \chi(H_n)$ and so after translating by an element of $H_n$ we get $X_{f\le 3n-3}^{s\le\chi} \cong X_{f\le 3n-3}^{0\le\chi}$, so $X_{f\le 3n-3}^{0\le\chi}$ is $(m(\chi)-1)$-connected.
\end{proof}

Our goal now is to prove that $X_{f\le 3n-3}^{0\le\chi}$ is not $(m(\chi)-1)$-connected. We will cover it with its intersection with certain blankets and apply the Strong Nerve Lemma. For each $1\le i\le n$ let $(Z_i^\alpha)_{\alpha\in I_i}$ be the collection of $\{i\}$-blankets in $X$ (so the connected components of $X^{\chi_i\le 0}$), and set
\[
Y_i^\alpha \defeq Z_i^\alpha \cap X_{f\le 3n-3}^{0\le\chi} \text{.}
\]
Here $I_i$ is just an appropriate indexing set.

\begin{lemma}\label{lem:cover}
The $Y_i^\alpha$ for $1\le i\le m(\chi)$ cover $X_{f\le 3n-3}^{0\le\chi}$.
\end{lemma}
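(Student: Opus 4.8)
The plan is to show that every $0$-cube $\phi$ of $X_{f\le 3n-3}^{0\le\chi}$ lies in some $Y_i^\alpha$ with $1\le i\le m(\chi)$, and then extend this to all cells by a convexity/locality argument. Since the $Y_i^\alpha$ are full subcomplexes (being intersections of the full subcomplexes $Z_i^\alpha$ and $X_{f\le 3n-3}^{0\le\chi}$), it suffices to handle $0$-cubes: if every vertex of a cube $C$ lies in a common $X^{\chi_i\le 0}$, then by the argument in the proof of Lemma~\ref{lem:blankets_cat0} all of $C$ lies in $X^{\chi_i\le 0}$, hence in the component $Z_i^\alpha$ containing $C$, and also in $X_{f\le 3n-3}^{0\le\chi}$, so $C\subseteq Y_i^\alpha$. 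Actually one must be slightly careful: the claim to prove for a single $0$-cube $\phi$ is that there exists $i\le m(\chi)$ with $\chi_i(\phi)\le 0$; but to cover a higher cube $C$ by a single $Y_i^\alpha$ one needs the \emph{same} index $i$ to work for all vertices of $C$ — I would instead just invoke that a cover by full subcomplexes covers the whole complex as soon as it covers the $0$-skeleton, provided the pieces are full, which is the cleaner route and avoids needing a uniform $i$.

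So the crux is the vertex statement: for $\phi\in M_n$ with $\chi(\phi)\ge 0$ and $f(\phi)\le 3n-3$, there is some $i\le m(\chi)$ with $\chi_i(\phi)\le 0$. First I would recall what $\chi_i(\phi)$ means: writing $\phi$ as an eventual translation with eventual translation amount $m_i$ on $\{i\}\times\N$, we have $\chi_i(\phi)=m_i$, and $\chi(\phi)=\sum_i a_i m_i$. Suppose for contradiction that $\chi_i(\phi)=m_i\ge 1$ for all $i\le m(\chi)$. The key combinatorial input is that $f(\phi)=|([n]\times\N)\setminus\image(\phi)|$ controls how much "room" there is: if $\phi$ translates $\{i\}\times\N$ eventually by $m_i$, then on each ray where $m_i\ge 1$ the map cannot be surjective onto that ray, and in fact the complement of the image, intersected with $\{i\}\times\N$, has size at least $m_i$ (roughly — one has to account for the finitely many exceptional points, but the eventual-translation structure forces at least $m_i$ missed points on ray $i$ when $m_i>0$, possibly after noting points can only be "pushed forward"). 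Summing, $f(\phi)\ge \sum_{i\,:\,m_i>0} m_i$. Meanwhile, since $\chi$ is in ascending standard form with $a_n=0$ and $m(\chi)$ is the largest index with $a_i\ne 0$ (so $a_i<0$ exactly for $i\le m(\chi)$), the constraint $\chi(\phi)=\sum a_i m_i\ge 0$ together with $m_i\ge 1$ for $i\le m(\chi)$ should be pushed to force $\sum_{i>m(\chi)} a_i m_i = \sum_{i>m(\chi)} 0\cdot$—wait, $a_i=0$ for $i>m(\chi)$, so $\chi(\phi)=\sum_{i\le m(\chi)} a_i m_i\ge 0$ with all $a_i<0$ and all $m_i\ge 1$ forces every such term $\le a_i<0$, giving $\chi(\phi)\le \sum_{i\le m(\chi)} a_i <0$, a contradiction. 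So in fact the bound $f\le 3n-3$ is not even needed for this lemma — the sign constraint alone does it; the height bound matters only in later lemmas about connectivity of the pieces. Let me present the short argument.

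The hard part, then, is essentially bookkeeping about standard form rather than any geometry: one must be sure that in ascending standard form $a_i<0$ precisely for $1\le i\le m(\chi)$ and $a_i=0$ for $m(\chi)<i\le n$ — this is exactly the remark at the end of Section~\ref{sec:houghton_chars} that $m(\chi)$ is the largest $i$ with $a_i\ne 0$. Given that, I would write: suppose no $Y_i^\alpha$ with $i\le m(\chi)$ contains $\phi$, i.e.\ $\chi_i(\phi)\ge 1$ for all $i\le m(\chi)$; then $\chi(\phi)=\sum_{i=1}^{m(\chi)}a_i\chi_i(\phi)\le\sum_{i=1}^{m(\chi)}a_i<0$, contradicting $\phi\in X^{0\le\chi}$. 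Hence some $\chi_i(\phi)\le 0$ with $i\le m(\chi)$, so $\phi$ lies in the component $Z_i^\alpha$ of $X^{\chi_i\le 0}$ through $\phi$, and since $\phi\in X_{f\le 3n-3}^{0\le\chi}$ we get $\phi\in Y_i^\alpha$. As the $Y_i^\alpha$ are full subcomplexes of $X_{f\le 3n-3}^{0\le\chi}$ covering its $0$-skeleton, they cover it entirely.
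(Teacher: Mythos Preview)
Your vertex-level argument is correct and matches the paper's: in ascending standard form $\chi=\sum_{i\le m(\chi)} a_i\chi_i$ with all $a_i<0$, so $\chi(\phi)\ge 0$ forces $\chi_i(\phi)\le 0$ for some $i\le m(\chi)$.

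The gap is in your passage to higher cubes. The assertion that ``a cover by full subcomplexes covers the whole complex as soon as it covers the $0$-skeleton'' is simply false: an edge on vertices $a,b$ is covered on its $0$-skeleton by the full subcomplexes $\{a\}$ and $\{b\}$, but the edge itself is not in either. Fullness only tells you that a cube lies in $Y_i^\alpha$ once you already know \emph{all} of its vertices lie in that same $Y_i^\alpha$; it does not manufacture a common index. You yourself flagged exactly this issue and then dismissed it with the false principle.

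The paper supplies the missing step as follows. Given a cube $C\subseteq X^{0\le\chi}$, let $v$ be the $0$-face of $C$ at which $f$ is maximal. Every other $0$-face $w$ of $C$ is obtained from $v$ by deleting some $t_j$ factors, so $\chi_i(w)\le\chi_i(v)$ for every $i$. Now apply the vertex argument to $v$ to get an index $i\le m(\chi)$ with $\chi_i(v)\le 0$; then $\chi_i(w)\le 0$ for all $0$-faces $w$ of $C$, hence $C\subseteq X^{\chi_i\le 0}$, and being connected $C$ lies in a single component $Z_i^\alpha$, hence in $Y_i^\alpha$. This is the uniform-$i$ argument you said you wanted to avoid; it cannot be avoided.
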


\begin{proof}
It suffices to show that
\[
X^{0\le\chi}\subseteq \bigcup\limits_{i=1}^{m(\chi)} X^{\chi_i\le 0} \text{.}
\]
First note that since $\chi=a_1\chi_1+\cdots+a_{m(\chi)}\chi_{m(\chi)}$ with $a_i<0$ for all $i$, any $0$-cube $v$ in $X$ satisfying $\chi(v)\ge 0$ must satisfy $\chi_i(v)\le 0$ for some $i$. This proves that the inclusion is true on the $0$-skeleton. Now take an arbitrary cube in $X^{0\le\chi}$ and let $v$ be its $0$-face at which $f$ is maximized. Then all $0$-faces $w$ of the cube satisfy $\chi_i(w)\le \chi_i(v)$, and hence as soon as $v$ lies in $X^{\chi_i\le 0}$ so does the cube.
\end{proof}

\begin{lemma}\label{lem:good_pieces}
Any non-empty intersection of any number of subcomplexes of the form $Y_i^\alpha$ (with $1\le i\le m(\chi)$) is $(m(\chi)-2)$-connected.
\end{lemma}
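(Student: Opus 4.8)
The plan is to identify any non-empty intersection of the $Y_i^\alpha$ (for $1 \le i \le m(\chi)$) with a ``sublevel-restricted blanket'' and then compute its connectivity via Morse theory using $(\chi,f)$. First I would invoke Corollary~\ref{cor:sheet_int}: an intersection $Z_{i_1}^{\alpha_1} \cap \cdots \cap Z_{i_r}^{\alpha_r}$, when non-empty, is a $K$-blanket $Z$ for $K = \{i_1,\dots,i_r\} \subseteq \{1,\dots,m(\chi)\}$, hence combinatorially convex in $X$, hence itself a $\CAT(0)$ cube complex. Since $\bigcap_j Y_{i_j}^{\alpha_j} = Z \cap X_{f\le 3n-3}^{0\le\chi}$, it suffices to understand the connectivity of $Z \cap X_{f\le 3n-3}^{0\le \chi}$ for an arbitrary $K$-blanket $Z$ with $K \subseteq \{1,\dots,m(\chi)\}$.

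Next I would run the Morse function $(\chi,f)$ on $Z$, restricted to the region $0 \le \chi$ and $f \le 3n-3$. The key point is that $Z$ is a locally combinatorially convex (full) subcomplex of $X$, so ascending and descending links of a $0$-cube $v \in Z$ with respect to $(\chi,f)$ are full subcomplexes of the corresponding links in $X$; concretely, the ascending link of $v$ in $Z$ is obtained from the ascending link of $v$ in $X$ by deleting the directions $t_i$ with $i \in K$ (those would increase $\chi_i$ past $0$ and leave $Z$), and similarly one must track which $t_i^{-1}$-directions survive. I would then want an analogue of \cite[Proposition~6.6]{zaremsky17} for these blanket-links: that for a $0$-cube $v$ in $Z \cap X_{f \le 3n-3}$ the relevant ascending/descending links in $Z$ are $(m(\chi)-2)$-connected. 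Granting that, the Morse Lemma (Lemma~\ref{lem:morse}) pushes the sublevel cutoff $f \le 3n-3$ up to all of $Z^{0\le\chi}$ without changing $\pi_k$ for $k \le m(\chi)-2$, and $Z^{0\le\chi}$ is in turn either all of $Z$ or is handled by a further Morse argument in the $\chi$-direction inside the contractible complex $Z$; either way it is $(m(\chi)-2)$-connected (indeed contractible in the best case), which is the desired bound.

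I would also need to handle the degenerate cases cleanly: if the intersection is empty there is nothing to prove, and if $r$ is large enough that $K$ exhausts $\{1,\dots,m(\chi)\}$ one should check the blanket is still non-empty and the connectivity bound still holds (here the fact that $a_i < 0$ for $i \le m(\chi)$ and $a_n = 0$ keeps the region $0 \le \chi$ non-trivial inside such a blanket). The main obstacle I anticipate is precisely the link computation: verifying that intersecting with the blanket $Z$ — i.e., deleting the $t_i$-directions for $i \in K$ and correspondingly restricting the surviving down-directions — only costs at most $|K| \le m(\chi)$ worth of connectivity, leaving the ascending link $(m(\chi)-2)$-connected. This should follow from the same kind of analysis as in \cite{zaremsky17} (the ascending links there are built from joins of simplices/spheres indexed by the coordinates $i$ with $a_i$ maximal, and removing coordinates with $a_i < 0$ from a join of this shape preserves high connectivity), but getting the bookkeeping exactly right, uniformly over all $K \subseteq \{1,\dots,m(\chi)\}$ and all $0$-cubes $v$ with $f(v) \le 3n-3$, is where the real work lies.
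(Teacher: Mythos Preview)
Your overall strategy matches the paper's: reduce to $Y = Z \cap X_{f\le 3n-3}^{0\le\chi}$ for a $K$-blanket $Z$ with $K \subseteq \{1,\dots,m(\chi)\}$, then run a two-stage Morse argument (once with $f$ to pass between the contractible $Z$ and $Z_{f\le 3n-3}$, once with $(\chi,f)$ to pass between $Z_{f\le 3n-3}$ and $Y$). However, you have one key misconception that leads you to anticipate work that is not actually there.

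You write that the $(\chi,f)$-ascending link of $v$ in $Z$ is obtained from that in $X$ by deleting the directions $t_i$ for $i \in K$, and that the ``real work'' is checking this costs at most $|K|$ worth of connectivity. In fact nothing is deleted: the $(\chi,f)$-ascending link of $\phi$ in $Z_{f\le 3n-3}$ \emph{equals} the $(\chi,f)$-ascending link of $\phi$ in $X_{f\le 3n-3}$. The ascending link in $X_{f\le 3n-3}$ is the join of an $f$-ascending part and an $f$-descending part. The $f$-ascending part consists only of directions $t_k$ with $k > m(\chi)$, because for $k \le m(\chi)$ one has $a_k < 0$, so moving along $t_k$ \emph{decreases} $\chi$ and is $(\chi,f)$-descending; thus the directions $t_i$ with $i\in K\subseteq\{1,\dots,m(\chi)\}$ were never in the ascending link to begin with, and the directions $t_k$ with $k>m(\chi)$ that are there leave each $\chi_i$ ($i\in K$) unchanged, hence remain in $Z$. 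The $f$-descending part automatically stays in $Z$, since passing from $\phi$ to $\psi$ with $\phi = t_j\circ\psi$ gives $\chi_i(\psi)=\chi_i(\phi)-\delta_{ij}\le\chi_i(\phi)$ for every $i$. Consequently \cite[Proposition~6.6]{zaremsky17} applies directly, with no analogue or modified bookkeeping required, and the Morse Lemma immediately gives that $Y$ is $(m(\chi)-2)$-connected. The same ``$f$-descending directions stay in $Z$'' observation also handles the first stage, showing $Z_{f\le 3n-3}$ is $(n-2)$-connected from the contractibility of $Z$.
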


\begin{proof}
For such an intersection to be non-empty, it must feature at most one term of the form $Y_i^\alpha$ for each $i$, so without loss of generality it is $Y_{i_1}^{\alpha_1} \cap\cdots\cap Y_{i_r}^{\alpha_r}$, with the $i_j$ pairwise distinct (here $\alpha_j\in I_{i_j}$). Call this intersection $Y$, and let $Z$ be $Z_{i_1}^{\alpha_1} \cap\cdots\cap Z_{i_r}^{\alpha_r}$, so $Y=Z\cap X_{f\le 3n-3}^{0\le \chi}$. To understand $Y$ we will now apply Morse theoretic techniques to $Z$, similar to those applied to $X$ in \cite{zaremsky17}. The first step is to get from $Z$ to $Z_{f\le 3n-3}$. By Corollary~\ref{cor:sheet_int}, $Z$ is contractible. If $\phi$ and $\psi$ are $0$-cubes of $X$ with $\phi=t_i\circ\psi$ and $\phi\in Z$ then $\psi\in Z$. Hence for any $0$-cube $\phi$ in $Z$, the $f$-descending link of $\phi$ in $X$ lies in $Z$. Since this is $(n-2)$-connected for $f(\phi)>2n-1$ (\cite[Citation~6.4]{zaremsky17}, \cite[Lemma~3.52]{lee12}), we see that $Z_{f\le 3n-3}$ is $(n-2)$-connected, hence $(m(\chi)-2)$-connected (here we are just using $f$ as a standard Morse function as in \cite{bestvina97}). Now we need to get from $Z_{f\le 3n-3}$ to $Y=Z_{f\le 3n-3}^{0\le \chi}$, and to do this we will use the Morse function (in the sense of Definition~\ref{def:morse}) $(\chi,f)$. For $\phi$ a $0$-cube in $Z_{f\le 3n-3}$, as in \cite{zaremsky17} the $(\chi,f)$-ascending link of $\phi$ in $X_{f\le 3n-3}$ is the join of an $f$-ascending part and an $f$-descending part. The $f$-descending part lies in $Z$ for the same reasons as above. The $f$-ascending part lies in $Z$ since it consists of directions labeled by $t_k$ for $m(\chi)<k\le n$, and each $\chi_{i_j}$ is constant in those directions. Hence the ascending link of $\phi$ in $Z_{f\le 3n-3}$ equals the ascending link of $\phi$ in $X_{f\le 3n-3}$, and this is $(m(\chi)-2)$-connected by \cite[Proposition~6.6]{zaremsky17}. Now the Morse Lemma~\ref{lem:morse} tells us that $Y=Z_{f\le 3n-3}^{0\le \chi}$ is $(m(\chi)-2)$-connected.
\end{proof}

Let $L$ be the nerve of the covering of $X_{f\le 3n-3}^{0\le\chi}$ by the $Y_i^\alpha$. Since $[\chi]\in \Sigma^{m(\chi)-1}(H_n)$ by Theorem~\ref{thrm:pos}, we know from Lemma~\ref{lem:drop_essential} that $X_{f\le 3n-3}^{0\le\chi}$ is $(m(\chi)-2)$-connected, so the Strong Nerve Lemma (Citation~\ref{cit:strong_nerve}), which applies by Lemma~\ref{lem:good_pieces}, says that $L$ is also $(m(\chi)-2)$-connected. To prove Theorem~\ref{thrm:BNSR_Houghton}, the last result we need is that $L$ is not $(m(\chi)-1)$-acyclic.

\begin{lemma}\label{lem:houghton_nerve}
The nerve $L$ is not $(m(\chi)-1)$-acyclic.
\end{lemma}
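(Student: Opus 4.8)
The plan is to compute enough of the homotopy type of $L$ to detect a non-trivial class in $H_{m(\chi)-1}(L)$, and the natural candidate is that $L$ has the homotopy type of a wedge of $(m(\chi)-1)$-spheres (or at least contains such a wedge as a retract). First I would fix $i$ with $1\le i\le m(\chi)$ and understand the combinatorics of the vertex set of $L$: the vertices are the pairs $(i,\alpha)$ with $\alpha\in I_i$ indexing the connected components $Z_i^\alpha$ of $X^{\chi_i\le 0}$ that actually meet $X_{f\le 3n-3}^{0\le\chi}$, and by Corollary~\ref{cor:sheet_int} a collection $(i_1,\alpha_1),\dots,(i_r,\alpha_r)$ with pairwise distinct $i_j$ spans a simplex of $L$ exactly when $Z_{i_1}^{\alpha_1}\cap\cdots\cap Z_{i_r}^{\alpha_r}$ is non-empty and meets $X_{f\le 3n-3}^{0\le\chi}$. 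So $L$ is naturally a full subcomplex of a join of $m(\chi)$ discrete vertex sets $I_1,\dots,I_{m(\chi)}$, i.e.\ of a ``multipartite'' complex, and the question is which simplices of that join survive.

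The key step is to identify, for each $i$, the relevant index set $I_i$ and to show that each $I_i$ has at least two elements while the ``link'' structure is rich enough that $L$ deformation retracts onto (or contains as a retract) the full join $I_1 * \cdots * I_{m(\chi)}$ of these discrete sets. A join of $m(\chi)$ discrete sets each of cardinality $\ge 2$ is homotopy equivalent to a wedge of $(m(\chi)-1)$-spheres, with at least one sphere, so it is not $(m(\chi)-1)$-acyclic, and a retract of it onto a subcomplex can only fail to be $(m(\chi)-1)$-acyclic if — hmm, more carefully: I would aim to show $L$ \emph{equals} this join, i.e.\ that whenever $(i_j,\alpha_j)$ are chosen with distinct $i_j$, the intersection $\bigcap_j Z_{i_j}^{\alpha_j}$ is non-empty and meets $X_{f\le 3n-3}^{0\le\chi}$. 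For non-emptiness of the intersection in $X$ one can exhibit an explicit $0$-cube (an element of $M_n$): given components indexed by translating each $Z_{i}^\alpha$ by a suitable power of $t_i$, one composes the corresponding translations; since the $t_i$ for distinct $i$ act on disjoint rays $\{i\}\times\N$, these adjustments are independent, so a common $0$-cube exists. For the intersection to meet $X_{f\le 3n-3}^{0\le\chi}$ one then needs a point of that intersection with $0\le\chi$ and with $f$-value at most $3n-3$; here one uses that there is enough freedom in the remaining coordinates (the rays $\{k\}\times\N$ for $k>m(\chi)$, on which $\chi$ is constant) to raise $\chi$ to be $\ge 0$ while only mildly increasing $f$, and that one can always find a representative of bounded corank — the bound $3n-3$ is presumably exactly what makes this work, paralleling \cite[Proposition~6.6]{zaremsky17}.

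Concretely, I expect the argument to go: (1) show each $I_i$ is infinite (the components of $X^{\chi_i\le 0}$ meeting the region are indexed, say, by the value of some ``$i$-th coordinate at infinity,'' and distinct translates $t_i^k$ land in distinct components), so in particular $|I_i|\ge 2$; (2) show every simplex of the join $I_1 * \cdots * I_{m(\chi)}$ is a simplex of $L$, by the explicit-point construction above; (3) conclude $L = I_1 * \cdots * I_{m(\chi)} \simeq \bigvee S^{m(\chi)-1}$, which is not $(m(\chi)-1)$-acyclic since the join of $m(\chi)\ge 1$ non-empty discrete sets, at least one of which (indeed all) has $\ge 2$ points, has non-zero reduced homology in degree $m(\chi)-1$. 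The main obstacle I anticipate is step (2): verifying that the candidate common $0$-cube can be chosen inside $X_{f\le 3n-3}^{0\le\chi}$, i.e.\ controlling simultaneously the corank $f$ (needs to stay $\le 3n-3$), the value of $\chi$ (needs to be $\ge 0$), and membership in the prescribed components $Z_{i_j}^{\alpha_j}$ (which pins down behavior along the rays $\{i_j\}\times\N$). This is the place where the specific constant $3n-3$ and the eventual-translation bookkeeping of \cite{zaremsky17} will have to be invoked carefully; everything else is formal once the join description of $L$ is in hand.
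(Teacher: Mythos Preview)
Your framework is right: $L$ sits inside the join $I_1 * \cdots * I_{m(\chi)}$ of discrete sets (since same-type vertices never span an edge), so $\dim L \le m(\chi)-1$, and any embedded $(m(\chi)-1)$-sphere in $L$ is automatically a non-trivial cycle. But two aspects of your plan do not work, and the paper's proof is both simpler and takes a different path to producing distinct components.

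First, your proposed invariant for distinguishing components --- an ``$i$-th coordinate at infinity'' with ``distinct translates $t_i^k$ landing in distinct components'' --- is wrong. For $k>0$ the element $t_i^k$ has $\chi_i(t_i^k)=k>0$ and is not in $X^{\chi_i\le 0}$ at all; more importantly, components of $X^{\chi_i\le 0}$ are \emph{not} distinguished by eventual-translation data. The paper produces distinct components by taking the identity $\id$ and the transposition $\tau_i$ swapping $(i,1)$ and $(i,2)$: these have identical eventual behavior (all $\chi_j$ vanish on both) yet lie in different $\{i\}$-blankets. The argument uses combinatorial convexity: a path in a blanket from $\id$ to $\tau_i$ can be taken to be $f$-increasing then $f$-decreasing; since $\chi_i$ is nonpositive along the path and equals $0$ at both ends, no edge can be labeled $t_i$, so the restriction to $\{i\}\times\N$ is constant along the path --- contradicting $\id|_{\{i\}\times\N}\ne \tau_i|_{\{i\}\times\N}$.

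Second, your aim to prove $L$ equals the \emph{full} join is unnecessary and likely not tractable with these tools; you yourself flag step~(2) as the obstacle, and your explicit-point construction rests on the faulty component description above. The paper sidesteps this entirely: it only needs \emph{one} embedded octahedral sphere, namely the join of the two-point sets $\{Y_i^1,Y_i^2\}$ (components through $\id$ and $\tau_i$) for $1\le i\le m(\chi)$. That every top simplex of this octahedron is in $L$ is immediate: the product $\prod_{\epsilon_j=2}\tau_j$ lies in $H_n$ (so $f=0\le 3n-3$) with $\chi=0$, and it clearly sits in the correct component for each $i$. So the ``main obstacle'' you anticipate --- simultaneously controlling $f\le 3n-3$ and $\chi\ge 0$ --- evaporates, because all the witnesses are genuine elements of $H_n$ with every $\chi_j$ equal to zero.
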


\begin{proof}
Each vertex of $L$ has a \emph{type} in $[m(\chi)]$, given by declaring that the vertex corresponding to $Y_i^\alpha$ has type $i$. Vertices of the same type cannot be adjacent, so $L$ is $(m(\chi)-1)$-dimensional. Thus it suffices to exhibit a non-trivial $(m(\chi)-1)$-cycle. For this we will find, for each $1\le i\le m(\chi)$, a pair of distinct vertices $Y_i^1$ and $Y_i^2$ of type $i$, such that $Y_1^{\epsilon_1}\cap\cdots\cap Y_{m(\chi)}^{\epsilon_{m(\chi)}} \ne\emptyset$ for every choice of $\epsilon_j\in\{1,2\}$. This will then yield an embedded $(m(\chi)-1)$-sphere in $L$, which must be homologically non-trivial since $L$ is $(m(\chi)-1)$-dimensional. For each $i$ take $Y_i^1$ to be the component $Y_i^\alpha$ containing the identity element of $H_n$, and take $Y_i^2$ to be the $Y_i^\alpha$ containing the transposition $\tau_i$ in $H_n$ that swaps $(i,1)$ and $(i,2)$. By construction $Y_1^{\epsilon_1}\cap\cdots\cap Y_{m(\chi)}^{\epsilon_{m(\chi)}} \ne\emptyset$ for every choice of $\epsilon_j\in\{1,2\}$, for instance this intersection contains the element of $H_n$ that is the product of those $\tau_i$ with $\epsilon_i=2$. It remains to show that for each $i$ we have $Y_i^1\ne Y_i^2$. It is enough to show that $Z_i^1\ne Z_i^2$. If $Z_i^1=Z_i^2$ (call it $Z$) then we can connect the identity to $\tau_i$ via an edge path in $Z$, and since $Z$ is combinatorially convex without loss of generality this edge path consists of a path along which $f$ strictly increases followed by a path along which $f$ strictly decreases (see \cite[Figure~3.8]{lee12} for some intuition). Since the path lies in $Z$, $\chi_i$ is non-positive on the whole path. Since $\chi_i(\id)=\chi_i(\tau_i)=0$ none of the edges in the path can be labeled by $t_i$. In particular adjacent vertices in the path must restrict to identical permutations on $\{i\}\times\N$, and hence all vertices on the path must restrict to the same permutation on $\{i\}\times\N$. Since $\id$ and $\tau_i$ do not, we have a contradiction.
\end{proof}

\begin{proof}[Proof of Theorem~\ref{thrm:BNSR_Houghton}]
By Lemma~\ref{lem:good_pieces} the Strong Nerve Lemma (Citation~\ref{cit:strong_nerve}) applies. The Strong Nerve Lemma together with Lemma~\ref{lem:houghton_nerve} says $X_{f\le 3n-3}^{0\le\chi}$ is not $(m(\chi)-1)$-acyclic. Now Lemma~\ref{lem:drop_essential} implies $[\chi]\in\Sigma^{m(\chi)}(H_n)^c$.
\end{proof}

\bibliographystyle{alpha}

\end{document}